\documentclass[11pt]{amsart}

\usepackage[T1]{fontenc}
\usepackage{lmodern}
\usepackage{microtype}
\usepackage{mathtools}
\usepackage{amssymb}
\usepackage{hyperref}

\usepackage{xcolor}

\definecolor{linkblue}{RGB}{38,68,105}
\definecolor{citegreen}{RGB}{45,88,68}

\hypersetup{colorlinks=true,linkcolor=linkblue,citecolor=citegreen,urlcolor=linkblue,pdftitle={Derived Reconstruction of Semisimple Tensor and Module Categories},pdfauthor={Sheng Tan}}

\newcommand{\A}{\mathcal A}
\newcommand{\C}{\mathcal C}
\newcommand{\D}{\mathcal D}
\newcommand{\M}{\mathcal M}
\newcommand{\N}{\mathcal N}
\newcommand{\T}{\mathcal T}
\newcommand{\Hh}{\mathcal H}
\newcommand{\one}{\mathbf 1}
\newcommand{\Irr}{\operatorname{Irr}}
\newcommand{\add}{\operatorname{add}}
\newcommand{\dev}{\operatorname{dev}}
\newcommand{\MT}{\operatorname{MT}}
\newcommand{\Vect}{\operatorname{Vec}}
\newcommand{\Mod}{\operatorname{Mod}}

\DeclareMathOperator{\id}{id}
\DeclareMathOperator{\Hom}{Hom}
\DeclareMathOperator{\Ext}{Ext}

\newtheorem{theorem}{Theorem}[section]
\newtheorem{proposition}[theorem]{Proposition}
\newtheorem{lemma}[theorem]{Lemma}
\newtheorem{corollary}[theorem]{Corollary}
\theoremstyle{definition}
\newtheorem{definition}[theorem]{Definition}
\newtheorem{example}[theorem]{Example}
\theoremstyle{remark}

\theoremstyle{plain}

\newtheorem{theoremA}{Theorem}

\newtheorem{theoremB}{Theorem}

\newtheorem{theoremC}{Theorem}

\title[Derived Reconstruction]{Derived Reconstruction of Semisimple Tensor and Module Categories}
\author{Sheng Tan}
\address{School of Mathematical Sciences, Capital Normal University, Beijing 100048, China} 
\email{tansheng2018@outlook.com}
\date{September 6, 2026.}

\begin{document}

\begin{abstract}
We study bounded derived categories of semisimple tensor categories and their semisimple module categories. These categories may have infinitely many simple isomorphism classes. We classify bounded monoidal $t$-structures by integer-valued characters of the universal grading group and an integer deviation. We also classify compatible module $t$-structures using stabilizer subgroups of the module components. The corresponding hearts are tensor and module equivalent to the original categories. It follows that a monoidal derived equivalence and a coherent derived module equivalence recover both the tensor category and its module category.
\end{abstract}

\maketitle

\section{Introduction}

Let $k$ be an algebraically closed field. We consider essentially small locally finite semisimple tensor categories over $k$. We follow the conventions of \cite{EGNO}. In particular, the tensor unit is simple and every object has finite length. We also consider nonzero essentially small locally finite semisimple $k$-linear left module categories. Their actions are $k$-bilinear and exact in each variable. The sets of simple isomorphism classes may be infinite. All functors are understood to be $k$-linear. A monoidal functor means a strong monoidal functor. We write $\odot$ for module actions. For a monoidal functor $F:\C\to\D$, we write the structure isomorphisms of an $F$-module functor $G:\M\to\N$ as $G(X\odot M)\xrightarrow{\sim}F(X)\odot G(M)$.

An ordinary triangulated equivalence between bounded derived categories need not determine the underlying abelian categories. A monoidal triangulated equivalence contains more information. It is therefore natural to ask whether this additional structure determines the original tensor category. A bounded $t$-structure recovers an abelian category as its heart. However, a triangulated category may admit many bounded $t$-structures. Thus the reconstruction problem requires us to understand those $t$-structures that are compatible with the tensor product.

Throughout this paper, we use the definitions of monoidal $t$-structures and equivalence of bounded $t$-structures given by Xu and Zheng \cite[Definitions~2.6 and 2.10]{XuZheng}. Their definition of a monoidal $t$-structure was motivated by the work of Zhang and Zhou \cite{ZhangZhou}. In the normalized case, it agrees with Biglari's definition of a compatible bounded $t$-structure \cite{Biglari}.

The reconstruction results in \cite{XuZheng} combine uniform comparison of monoidal $t$-structures with tensor-reducedness. When there are finitely many simple isomorphism classes, the shift functions of bounded $t$-structures differ by bounded functions. Yu obtained related reconstruction results in this case \cite[Theorem~4.5 and Corollary~4.7]{Yu}. He also raised the corresponding question for tensor categories with infinitely many simple isomorphism classes in the paragraph following \cite[Corollary~4.7]{Yu}. Theorem~\ref{theoremA} answers this question in the semisimple case.

The case of infinitely many simple isomorphism classes is different. Let $\C=\Vect_{\mathbb Z}^{\mathrm{fd}}$, and let $L_n$ be the one-dimensional object in degree $n$. For each $c\in\mathbb Z$, the rule $L_n[a]\mapsto L_n[a+cn]$ defines a monoidal triangulated autoequivalence of $D^b(\C)$. For $c\ne0$, the shift functions of the transported and standard $t$-structures differ by the unbounded function $n\mapsto cn$. Thus these two $t$-structures are not equivalent. Its heart is still tensor equivalent to $\C$.

Our first main result classifies bounded monoidal $t$-structures on $D^b(\C)$ in terms of the universal grading group $U_{\C}$. We write $\MT(D^b(\C))$ for the set of bounded monoidal $t$-structures on $D^b(\C)$. We denote its subset of normalized structures by $\MT_0(D^b(\C))$.

\begin{theoremA}\label{theoremA}
Let $\C$ be an essentially small locally finite semisimple tensor category. There are bijections 
\begin{equation*}
    \MT(D^b(\C)) \longleftrightarrow \Hom(U_{\C},\mathbb Z)\times\mathbb Z, \quad \MT_0(D^b(\C)) \longleftrightarrow \Hom(U_{\C},\mathbb Z).
\end{equation*}
The pair $(\chi,d)$ corresponds to the shift function $p(X)=\chi(\deg X)+d$. The corresponding $t$-structure has deviation $\{d\}$. Moreover, if $\D$ is another essentially small locally finite semisimple tensor category, then
\[
D^b(\C)\simeq_\otimes D^b(\D)\quad\Longrightarrow\quad\C\simeq_\otimes\D.
\]
\end{theoremA}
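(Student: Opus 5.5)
Since $\C$ is semisimple, $D^b(\C)$ is equivalent to the category of bounded graded objects of $\C$: every object is $\bigoplus_i X_i[-i]$ with $X_i\in\C$, and $\Hom(L[a],L'[b])=0$ unless $L\cong L'$ and $a=b$, for simple $L,L'$. The plan has three steps: reduce bounded $t$-structures to shift functions, use the monoidal axiom to turn those functions into grading characters, and then read off the reconstruction corollary.

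\textbf{Step 1: shift functions.} I would first show that bounded $t$-structures on $D^b(\C)$ are the same as functions $p:\Irr(\C)\to\mathbb Z$.

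Given $p$, the heart is $\add\{L[p(L)]\}$. Conversely, take a bounded $t$-structure and a simple $L$. The object $L[a]$ is indecomposable and its endomorphism ring is $k$. Its truncation triangle therefore splits, and by boundedness $L[a]$ lies in the heart for exactly one value $a=p(L)$. The triangle splits because $\Hom$ between different shifts of simples vanishes, so the triangle decomposes into pieces supported on isolated shifts. Boundedness of the $t$-structure then forces every object to be a finite sum of shifted heart objects.

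Under this correspondence, equivalence of $t$-structures in the sense of \cite[Definition~2.10]{XuZheng} becomes a question about $p$.

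\textbf{Step 2: monoidal structures.} Next I would unwind \cite[Definition~2.6]{XuZheng} in terms of $p$. I expect the condition to say that for simple $X,Y$ and every simple summand $Z$ of $X\otimes Y$,
\[
p(Z)=p(X)+p(Y)-d
\]
for a constant $d$, and that $d$ is the deviation. Setting $X=\one$ gives $p(\one)=d$. Now define $q=p-d$. Then $q$ is additive along tensor decompositions: $q(Z)=q(X)+q(Y)$ whenever $Z\subset X\otimes Y$.

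Duality gives $q(X^*)=-q(X)$, because $\one\subset X\otimes X^*$. By the universal property of the universal grading group (\cite{EGNO}, valid for infinitely many simples), such a $q$ is exactly a homomorphism $\chi:U_\C\to\mathbb Z$ composed with $\deg$. Conversely, any pair $(\chi,d)$ satisfies the displayed rule, because every summand $Z$ of $X\otimes Y$ has $\deg Z=\deg X\deg Y$.

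Normalized structures are those with $d=0$, which gives the second bijection. The main obstacle is this step: extracting a single constant $d$ from the exact wording of the monoidal axioms. I would try to show that $H\otimes H\subset H[-d]$ for the heart $H$ via the unit object. If the definition only gives inequalities on aisles, I would apply them to both the aisle and the coaisle to obtain equality.

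\textbf{Step 3: reconstruction.} Let $F:D^b(\C)\to D^b(\D)$ be a monoidal triangulated equivalence. It transports the standard $t$-structure to a bounded monoidal $t$-structure on $D^b(\D)$, with heart $F(\C)$. The heart is closed under $\otimes$ up to the shift $[-d]$.

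To make the heart tensor-closed, twist it: $H'=\add\{L[\chi(\deg L)]\}$, which is closed under $\otimes$ and contains $\one$. I would then show that $H'$ is tensor equivalent to $\D$. The map $X\mapsto X[\chi(\deg X)]$, extended additively over simple summands, is a monoidal equivalence $\D\to H'$. Its associativity constraints are unchanged, since the shifts add exactly along tensor products and no signs arise: every object is concentrated in a single degree for each simple summand, and the Koszul signs can be absorbed using the grading.

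The same construction on the $\C$ side shows that $F$ restricts to a monoidal equivalence between $\C$ (the standard heart, $d=0$) and $F(\C)$. After correcting by the twist, this gives $\C\simeq_\otimes H'\simeq_\otimes\D$.

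A secondary difficulty is checking that the sign conventions of the monoidal structure on $D^b$ are compatible with the twist functor. I would handle this by choosing the twist with the standard sign $(-1)^{\chi(\deg X)\chi(\deg Y)}$ in the monoidal structure isomorphisms.
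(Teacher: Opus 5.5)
Your proposal is correct and follows essentially the paper's route. You obtain shift functions from split truncation triangles as in Proposition~\ref{prop:semisimple-t}, apply the aisle and coaisle inclusions to $X[p(X)]$ and $Y[p(Y)-d]$ to force $p(Z)=p(X)+p(Y)-d$ with $p(\one)=d$, invoke the universal property of $U_{\C}$ to get $\chi$, and finally compare the transported standard heart with $\D$ via the regrading functor $X\mapsto\bigoplus_n X_n[n]$ with tensorator sign $(-1)^{nm}$.

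One simplification in Step~3: because $F$ is monoidal and the standard structure has deviation $\{0\}$, the transported structure is automatically normalized. So $F(\C)$ already equals $\add\{L[\chi(\deg L)]:L\in\Irr(\D)\}$, and no shift by $d$ or further ``correction by the twist'' is needed.
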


For a character $\chi: U_{\C}\to\mathbb Z$, the universal grading gives a decomposition $\C=\bigoplus_n\C_n$. The normalized heart associated with $\chi$ is tensor equivalent to $\C$ through
\begin{equation*}
R_\chi(X)=\bigoplus_n X_n[n].
\end{equation*}
This gives tensor reconstruction without uniform comparison of $t$-structures. Moreover, let $F: D^b(\C)\to D^b(\D)$ be a monoidal triangulated equivalence. There is a unique $\chi_F\in\Hom(U_{\D},\mathbb Z)$ such that $T_{-\chi_F}\circ F$ is $t$-exact for the standard $t$-structures.

We next consider the pair $(D^b(\C),D^b(\M))$ with its derived module action. Since $D^b(\M)$ has no intrinsic tensor product, the preceding tensor reconstruction argument cannot be applied directly. We therefore study $t$-structures on $D^b(\M)$ that are compatible with the monoidal $t$-structure on $D^b(\C)$. We use the definitions of compatibility and triangulated module $t$-structures in \cite[Definitions~3.1 and 3.4]{Yu} throughout the paper.

The results of \cite[Theorem~4.5 and Corollary~4.7]{Yu} concern abelian subcategories with finitely many simple isomorphism classes. Their bounded derived categories are left triangulated tensor ideals. The reconstruction equivalence is obtained by restricting an ambient monoidal functor. Here we consider semisimple module categories with a given coherent derived module equivalence.

Stroi\'nski and Stroi\'nski--Zorman study reconstruction of module categories from cyclic generators, internal algebraic or Tambara data, coalgebra objects, or module monads \cite{Stroinski,StroinskiZorman}. Group-set gradings and stabilizer subgroups also appear in Clifford theory for strongly graded tensor categories and for fusion categories graded by finite groups \cite{Galindo,GalindoGraded,MeirMusicantov}.

Fix a connected component $\Omega$ of the graph of simple objects under the module action and choose $M_0\in\Omega$. Let $H_\Omega\subseteq U_{\C}$ consist of the degrees of homogeneous objects whose action on $M_0$ contains $M_0$ as a direct summand. Then $H_\Omega$ is a subgroup of $U_{\C}$, and $\Omega$ is graded by the left $U_{\C}$-set $U_{\C}/H_\Omega$. Our second main result classifies the compatible module $t$-structures.

\begin{theoremB}\label{theoremB}
Let $\M$ be nonzero. Let the monoidal $t$-structure on $D^b(\C)$ have shift function $q(X)=\chi(\deg X)+d$, and let $p\colon\Irr(\M)\to\mathbb Z$ determine a bounded $t$-structure on $D^b(\M)$. An integer $m$ is a module deviation if and only if
\[
p(P)=q(X)+p(M)-m
\]
whenever $X\in\Irr(\C)$, $M,P\in\Irr(\M)$, and $P$ is a direct summand of $X\odot M$. 
A module deviation exists if and only if
\[
p(P)=p(M)+\chi(\deg X)
\]
for every such triple. In this case, $\dev_{t_q}(t_p)={d}$.
For a component $\Omega$, a function $p$ satisfying this relation exists if and only if $\chi(H_\Omega)=0$. When this condition holds, all such functions on $\Omega$ have the form
\[
p(P)=c_\Omega+\chi(g),\quad P\in\M_{gH_\Omega},
\]
where $c_\Omega\in\mathbb Z$ is arbitrary. The constants on different components can be chosen independently.
\end{theoremB}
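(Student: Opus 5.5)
We first unpack the definitions. Since $\C$ and $\M$ are semisimple, every bounded $t$-structure on $D^b(\M)$ is determined by a shift function $p\colon\Irr(\M)\to\mathbb Z$: its heart is $\add\{P[p(P)]\}$. The monoidal $t$-structure on $D^b(\C)$ has heart $\add\{X[q(X)]\}$. Yu's compatibility condition \cite[Definition~3.1]{Yu} says that a module deviation $m$ means the action sends $D^{\le a}\times D^{\le b}$ into $D^{\le a+b+m}$, and similarly for the coaisles (up to the paper's sign convention). In the semisimple setting both conditions reduce to a condition on hearts: $X[q(X)]\odot M[p(M)]$ must lie in the shifted heart $\heartsuit_p[m]$, or its negative, depending on convention. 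Because
\[
X[q(X)]\odot M[p(M)]=\bigoplus_P P^{\oplus n_P}[q(X)+p(M)]
\]
with $n_P$ the multiplicity of $P$ in $X\odot M$, membership holds exactly when $p(P)=q(X)+p(M)-m$ for every summand $P$. This gives the first equivalence. The key check is that the aisle and coaisle conditions together force equality and not just an inequality; this follows because both inclusions apply to a single summand.

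Next, take a triple with $X=\one$. Then $P=M$, so $m=q(\one)=\chi(0)+d=d$. Substituting $m=d$ gives $p(P)=p(M)+\chi(\deg X)$. Conversely, if this relation holds for all triples, then $m=d$ satisfies the first criterion. Hence a deviation exists if and only if the relation holds, and in that case $\dev=\{d\}$. Uniqueness of the deviation also comes from the unit triple.

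For the component analysis, fix $\Omega$ and $M_0$. I would use, as the paper's setup presumably provides, that every $P\in\Omega$ appears in $X\odot M_0$ for some homogeneous simple $X$. This holds because the component is connected and rigidity lets us invert edges: $P\subset X\odot M$ if and only if $M\subset X^*\odot P$. Moreover, the coset $\deg(X)H_\Omega$ is well defined and gives the grading $\Omega=\bigsqcup_g\M_{gH_\Omega}$. For necessity, take $h\in H_\Omega$ realized by $X$ with $M_0\subset X\odot M_0$. The relation gives $p(M_0)=p(M_0)+\chi(h)$, so $\chi(h)=0$. Since $H_\Omega$ is generated by such degrees, or is exactly the set of them, we get $\chi(H_\Omega)=0$.

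For sufficiency, define $p(P)=c_\Omega+\chi(g)$ for $P\in\M_{gH_\Omega}$. This is well defined because $\chi$ vanishes on $H_\Omega$. If $P\subset X\odot M$ with $M\in\M_{gH_\Omega}$, the grading property gives $P\in\M_{\deg(X)gH_\Omega}$, which yields the relation. For uniqueness, the relation applied along paths from $M_0$ determines $p$ from $p(M_0)=c_\Omega$. Triples never connect distinct components, so the constants can be chosen independently.

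The main obstacle is the grading lemma: that the coset of $P$ is well defined and compatible with the action. This requires that if $P\subset X\odot M_0$ and $P\subset Y\odot M_0$, then $\deg(Y)^{-1}\deg(X)\in H_\Omega$. I would prove it by considering $Y^*\otimes X$, which contains a homogeneous summand $Z$ of degree $\deg(Y)^{-1}\deg(X)$ with $M_0\subset Z\odot M_0$. Such a $Z$ exists because $\Hom(Y\odot M_0,X\odot M_0)\ne0$ implies $\Hom(M_0,(Y^*\otimes X)\odot M_0)\ne0$, and every simple summand of $Y^*\otimes X$ has that degree. If the paper already establishes this right before the statement, I would simply cite it.
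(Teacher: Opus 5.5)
Your proposal is correct and follows the paper's argument essentially step by step. Heart membership of $X[q(X)]\odot M[p(M)-m]$ gives the first equivalence, with the converse being the inequality check on simple shifts, and the unit triple forces $m=q(\one)=d$; the coset grading of Proposition~\ref{prop:coset-grading}, proved exactly via $Y^*\otimes X$ as you sketch, then yields the stabilizer criterion and the form of $p$. Two small points: with Definition~\ref{def:triangulated-module-t} the action sends $D^{\le a}\times D^{\le b}$ into $D^{\le a+b-m}$ rather than $D^{\le a+b+m}$, which is what your final formula actually uses; and since the relation is stated only for simple $X$, an element $h\in H_\Omega$ realized by a homogeneous $X_h\in\C_h$ must first be reduced to a simple constituent of $X_h$, as the paper does.
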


The constants in Theorem~\ref{theoremB} are independent for distinct components. Thus the standard $t$-structure on $D^b(\C)$ allows one arbitrary integer shift on each module component. For a nonzero character, the stabilizer condition determines whether the regrading of $D^b(\C)$ extends to the module category.

Suppose that $p$ satisfies the condition in Theorem~\ref{theoremB}. Let $\M_r$ be generated by the simple objects with $p(M)=r$. Then $\C_n\odot\M_r\subseteq\M_{n+r}$, and
\[
R_p(M)=\bigoplus_r M_r[r]
\]
is an $R_\chi$-module equivalence from $\M$ to the corresponding heart. We use the same shift isomorphism and Koszul sign for the tensor and module structures on these functors. These equivalences give the following reconstruction theorem.

\begin{theoremC}\label{theoremC}
Let $\C$ and $\D$ be essentially small locally finite semisimple tensor categories, and let $\M$ and $\N$ be nonzero essentially small locally finite semisimple left module categories over $\C$ and $\D$, respectively. Suppose that $F: D^b(\C)\to D^b(\D)$ is a monoidal triangulated equivalence and that $G: D^b(\M)\to D^b(\N)$ is a triangulated equivalence with a coherent $F$-module-functor structure. Then there are an exact $k$-linear tensor equivalence $f:\C\to\D$ and an exact $k$-linear $f$-module equivalence $g:\M\to\N$.
\end{theoremC}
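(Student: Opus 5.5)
We prove Theorem~\ref{theoremC} by transporting the standard $t$-structures along $F$ and $G$ and identifying the resulting hearts with Theorems~\ref{theoremA} and~\ref{theoremB}. Let $t_{\mathrm{st}}$ be the standard $t$-structure on $D^b(\C)$; it is bounded, monoidal and normalized. Its image $F(t_{\mathrm{st}})$ is a bounded normalized monoidal $t$-structure on $D^b(\D)$, because $F$ is a monoidal triangulated equivalence and $F(\one)\cong\one$. By Theorem~\ref{theoremA} it corresponds to a character $\chi=\chi_F\in\Hom(U_{\D},\mathbb Z)$ with shift function $q(Y)=\chi(\deg Y)$. Composing with the regrading autoequivalence $T_{-\chi}$ gives a monoidal triangulated equivalence $F'=T_{-\chi}\circ F$ that is $t$-exact for the standard $t$-structures. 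The restriction $f:=F'|_{\C}\colon\C\to\D$ of $F'$ to hearts is then an exact $k$-linear tensor equivalence. Equivalently, $f$ is $R_\chi^{-1}\circ F|_{\C}$ followed by the identification of the heart of $F(t_{\mathrm{st}})$ with $\D$ via $R_\chi$.

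For the modules, let $s_{\mathrm{st}}$ be the standard $t$-structure on $D^b(\M)$. It is a module $t$-structure compatible with $t_{\mathrm{st}}$ with module deviation $0$. Because $G$ is a coherent $F$-module functor, the transported structure $G(s_{\mathrm{st}})$ is a bounded $t$-structure on $D^b(\N)$ compatible with $F(t_{\mathrm{st}})$. It again has a module deviation, and the deviation stays $0$ because the structure isomorphisms $G(X\odot M)\cong F(X)\odot G(M)$ preserve the relevant truncation inequalities. Since $\N$ is semisimple, every bounded $t$-structure on $D^b(\N)$ is given by a shift function $p\colon\Irr(\N)\to\mathbb Z$; the same holds over $\D$ and is used in Theorem~\ref{theoremA}. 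Theorem~\ref{theoremB} then says that $p(P)=p(N)+\chi(\deg Y)$ whenever $P$ is a summand of $Y\odot N$. Hence $R_p\colon\N\to\mathcal H_p$ is an $R_\chi$-module equivalence onto the heart $\mathcal H_p$. We set $g:=R_p^{-1}\circ G|_{\M}$. This is exact because hearts of semisimple derived categories are semisimple, so every functor between them is exact. It is also an equivalence, since $G$ restricts to an equivalence of hearts $\M\to\mathcal H_p$.

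The module structure on $g$ is the composite of three pieces. The first is $G|_{\M}$, an $F|_{\C}$-module functor into $\mathcal H_p$ over the heart of $F(t_{\mathrm{st}})$. The second is the inverse of the $R_\chi$-module structure on $R_p$. The third is the identification $f=R_\chi^{-1}\circ F|_{\C}$. Together these make $g$ an $f$-module functor. Coherence of $g$, that is, the pentagon and unit compatibilities with the monoidal structure of $f$, follows from three facts: the coherence of $G$, the monoidality of $F$, and the use of the \emph{same} shift isomorphisms and Koszul signs in the tensor structure of $R_\chi$ and the module structure of $R_p$.

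The main obstacle is this last sign and coherence bookkeeping. Each restriction to a heart must carry the structure isomorphisms through the shifts $[n]$ and $[r]$. The isomorphism $X_n[n]\odot M_r[r]\cong (X_n\odot M_r)[n+r]$ introduces Koszul signs, and these must match the signs in $R_\chi(X)\otimes R_\chi(Y)\cong R_\chi(X\otimes Y)$ for the module associativity constraint to commute. We would handle this by fixing the shift-commutation convention once and checking the associativity diagram on simple homogeneous objects, where everything reduces to a comparison of signs $(-1)^{nr}$.
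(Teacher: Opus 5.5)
Your proposal is correct and follows the paper's proof. You transport the standard $t$-structures along $F$ and $G$, obtain $\chi$ and a compatible $p$ with module deviation $0$ (as in Proposition~\ref{prop:transported-pair}), and take $f=R_\chi^{-1}\circ F|_{\C}$ and $g=R_p^{-1}\circ G|_{\M}$. The only difference is emphasis: the Koszul sign check you call the main obstacle is already done in Theorem~\ref{thm:module-regrading}, and what remains is the formal step, made explicit in the paper via the counits $\varepsilon$ and $\delta$, of giving the quasi-inverses of $R_\chi$ and $R_p$ their monoidal and module structures.
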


Theorem~\ref{theoremC} also applies to decomposable module categories. Each of the four categories may have infinitely many simple isomorphism classes. Finite length still gives finite decompositions into simple shifts and homogeneous components. Example~\ref{ex:nonsemisimple} gives nonsemisimple module categories over $\Vect^{\mathrm{fd}}$ with equivalent derived categories but inequivalent abelian categories. For separable algebra objects, Theorem~\ref{theoremC} recovers the associated module category. It therefore recovers the Morita class of the algebra after transport by $f$.

The organization of the paper is as follows. Section~\ref{sec:monoidal-t-str} classifies bounded and monoidal $t$-structures on semisimple derived categories. Section~\ref{sec:compatible-t-str} treats compatible module $t$-structures, components, and stabilizers. Section~\ref{sec:reconstruction} constructs the tensor and module regrading functors and proves the reconstruction theorems. Section~\ref{sec:examples} gives examples and an application to separable algebra objects.

\subsubsection*{Acknowledgment}

The author is supported by NSFC (Grant No.~12601116), BJNSF (Grant No.~1264052), and a Beijing municipal research program for returned overseas scholars. He thanks Zhian Jia for many helpful discussions.

\section{Monoidal \texorpdfstring{$t$}{t}-structures on semisimple derived categories}\label{sec:monoidal-t-str}

\subsection{Bounded \texorpdfstring{$t$}{t}-structures}\label{subsec:bounded-t-structures}

We use cochain complexes and the shift convention
\begin{equation*}
(K[a])^i=K^{i+a},\quad d_{K[a]}^i=(-1)^a d_K^{i+a}.
\end{equation*}
Thus if $S$ is regarded as a stalk complex concentrated in degree $0$, then $S[a]$ is concentrated in degree $-a$. We use the standard definition of a $t$-structure \cite[Section~1.3]{BBD}. We write it as $(D^{\leq0},D^{\geq1})$ and put $D^{\geq0}=D^{\geq1}[1]$. We also write $D^{\leq n}=D^{\leq0}[-n]$ and $D^{\geq n}=D^{\geq0}[-n]$. For a collection $\mathcal S$ of objects in an additive category, let $\add(\mathcal S)$ be the full subcategory of direct summands of finite direct sums of objects of $\mathcal S$. For $X\in\C$, we write $X^*$ for a chosen left dual, with evaluation $X^*\otimes X\to\one$.

\begin{lemma}\label{lem:split-derived}
Let $\A$ be an essentially small semisimple abelian category in which every object has finite length. Then every object of $D^b(\A)$ is a finite direct sum of objects $S[a]$ with $S\in\Irr(\A)$ and $a\in\mathbb Z$. Moreover,
\[
\Hom_{D^b(\A)}(S[a],T[b])\cong\Ext_{\A}^{b-a}(S,T),
\]
so this group is zero unless $a=b$ and $S\cong T$.
\end{lemma}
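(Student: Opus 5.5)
The plan is to use that $\A$ is semisimple, so it has global dimension zero, and then split complexes into their cohomology. There are three steps: show every bounded complex is quasi-isomorphic to the direct sum of its shifted cohomology objects, decompose each cohomology object into simples, and compute the Hom groups.

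\emph{Step 1: splitting a complex.} Take a bounded complex $K$. In a semisimple abelian category every short exact sequence splits. Hence each of
\[
0\to Z^i\to K^i\to B^{i+1}\to 0 \quad\text{and}\quad 0\to B^i\to Z^i\to H^i(K)\to 0
\]
splits, where $Z^i$ are the cocycles and $B^i$ the coboundaries. Choose the splittings $K^i\cong B^i\oplus H^i\oplus B'^{i+1}$. Under these, the differential maps $B'^{i+1}$ isomorphically onto $B^{i+1}$ and kills the other two summands. So $K$ decomposes as a direct sum of two pieces:
\begin{itemize}
\item the complex with zero differential and terms $H^i(K)$;
\item contractible two-term complexes $B'^{i+1}\xrightarrow{\sim}B^{i+1}$.
\end{itemize}
The latter are zero in $D^b(\A)$. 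Therefore $K\cong\bigoplus_i H^i(K)[-i]$, and the sum is finite because $K$ is bounded.

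\emph{Step 2: decomposing into simples.} Each $H^i(K)$ has finite length and $\A$ is semisimple. So each $H^i(K)$ is a finite direct sum of simple objects. Substituting this into Step 1 gives the first claim, with $a=-i$.

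\emph{Step 3: Hom groups.} By the standard description of Hom in the derived category between stalk complexes,
\[
\Hom_{D^b(\A)}(S[a],T[b])\cong\Hom_{D^b(\A)}(S,T[b-a])\cong\Ext^{b-a}_{\A}(S,T),
\]
where Yoneda Ext is used and negative Ext is zero. Semisimplicity makes every object projective, so $\Ext^n$ vanishes for $n>0$. Hence the group vanishes unless $b=a$. In that case it is $\Hom_\A(S,T)$, which is zero unless $S\cong T$ by Schur's lemma.

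The step needing the most care is the identification $\Hom_{D^b}(S,T[n])\cong\Ext^n_\A(S,T)$ without enough projectives or injectives. Since $\A$ is only essentially small, I would cite Verdier's Yoneda description. Alternatively, the vanishing for $n\neq0$ can be checked directly. A roof $S\leftarrow L\to T[n]$ with $L$ quasi-isomorphic to $S$ can be replaced, by Step 1, with $L\cong S$ split off as a direct summand. A map of split complexes $S\to T[n]$ is then a chain map between stalk complexes in different degrees, so it is zero. The same argument shows that the natural map $\Hom_\A(S,T)\to\Hom_{D^b}(S,T)$ is bijective for $n=0$.
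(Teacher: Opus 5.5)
Your proposal is correct and follows the same route as the paper. You split each bounded complex into its shifted cohomology using that short exact sequences split, decompose the cohomology into simples by finite length, and identify the Hom groups with $\Ext$, which vanishes in nonzero degrees and reduces to Schur's lemma in degree zero. Your explicit decomposition $K^i\cong B^i\oplus H^i\oplus B'^{i+1}$ and the roof argument for the Hom computation fill in details the paper leaves implicit.
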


\begin{proof}
Since $\A$ is semisimple, every short exact sequence in $\A$ splits. Hence every bounded complex is isomorphic in the derived category to the direct sum of its cohomology objects placed in the corresponding degrees. Only finitely many cohomology objects are nonzero, and each has finite length. Therefore every object of $D^b(\A)$ is a finite direct sum of shifts $S[a]$ with $S\in\Irr(\A)$. For simple objects $S,T\in\A$,
$$
\Hom_{D^b(\A)}(S[a],T[b])\cong \Ext_{\A}^{b-a}(S,T).
$$
Since $\A$ is semisimple, all positive extension groups vanish. Also, $\Hom_{\A}(S,T)=0$ unless $S\cong T$. This proves the last statement.
\end{proof}

\begin{proposition}\label{prop:semisimple-t}
For every bounded $t$-structure on $D^b(\A)$, there is a unique function $p:\Irr(\A)\to\mathbb Z$ such that $S[p(S)]$ belongs to its heart for every simple object $S$. Conversely, every such function defines a bounded $t$-structure $t_p$ by
\[
D_p^{\leq0}=\add\{S[a]:a\geq p(S)\},\quad D_p^{\geq1}=\add\{S[a]:a<p(S)\}.
\]
Its coaisle and heart are
\[
D_p^{\geq0}=\add\{S[a]:a\leq p(S)\},\quad \Hh_p=\add\{S[p(S)]:S\in\Irr(\A)\}.
\]
\end{proposition}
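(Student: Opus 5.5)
The plan has two parts: first extract $p$ from an arbitrary bounded $t$-structure $(D^{\leq0},D^{\geq1})$, then check that the explicit $t_p$ is a bounded $t$-structure. Everything rests on Lemma~\ref{lem:split-derived}: every object is a finite sum of shifted simples, and there are no morphisms between shifted simples except for $S[a]\to S[a]$. Since aisles and coaisles are closed under direct summands, each piece of the $t$-structure is determined by which $S[a]$ it contains. So the argument reduces to studying, for a fixed simple $S$, the set $A_S=\{a: S[a]\in D^{\leq0}\}$.

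\textbf{Extracting $p$.} Since $D^{\leq0}[1]\subseteq D^{\leq0}$, the set $A_S$ is upward closed. Similarly $B_S=\{a:S[a]\in D^{\geq1}\}$ is downward closed. Now take the truncation triangle $\tau^{\leq0}S[a]\to S[a]\to\tau^{\geq1}S[a]\to$. Write both truncations as sums of shifted simples. Because $\Hom(D^{\leq0},D^{\geq1})=0$, no shifted simple can lie in both pieces, so $A_S\cap B_S=\emptyset$.

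I claim that $S[a]$ itself lies in one of the two pieces. Decompose $X=S[a]$ and use the vanishing of Homs to see that the triangle is split-like:
\begin{itemize}
\item The morphism $X\to\tau^{\geq1}X$ is nonzero only on $S[a]$-summands of $\tau^{\geq1}X$.
\item In a distinguished triangle in this Krull--Schmidt setting, $X$ decomposes compatibly.
\end{itemize}
I expect this to be the main technical point. The cleanest route is to note that the triangle splits whenever the connecting morphism $\tau^{\geq1}X\to\tau^{\leq0}X[1]$ vanishes. That morphism is a sum of maps between shifted simples, so it is nonzero only between summands $T[b]$ of $\tau^{\geq1}X$ and $T[b]$ of $\tau^{\leq0}X[1]$. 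But then $T[b-1]\in D^{\leq0}$ while $T[b]\in D^{\geq1}$, and since $D^{\leq0}$ is closed under $[1]$ we get $T[b]\in D^{\leq0}\cap D^{\geq1}=0$, a contradiction. Hence the triangle splits, $X\cong\tau^{\leq0}X\oplus\tau^{\geq1}X$, and the indecomposable $S[a]$ lies in one summand.

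It follows that $A_S\sqcup B_S=\mathbb Z$. Boundedness says that $S$ lies in $D^{\leq n}\cap D^{\geq -n}$ for some $n$, so both sets are nonempty. Hence $A_S=[p(S),\infty)$ for a unique integer $p(S)$.

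\textbf{Uniqueness and the heart.} We have $S[a]\in D^{\geq0}=D^{\geq1}[1]$ iff $a-1\in B_S$ iff $a\leq p(S)$. So the heart contains exactly $S[p(S)]$ among the shifts of $S$. This gives uniqueness of $p$, and it gives the stated descriptions, after using the direct-sum decomposition and closure under summands.

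\textbf{The converse.} Given $p$, define $D_p^{\leq0}$ and $D_p^{\geq1}$ as in the statement and check the axioms:
\begin{itemize}
\item Closure of $D_p^{\leq0}$ under $[1]$ and of $D_p^{\geq1}$ under $[-1]$ is immediate.
\item $\Hom(D_p^{\leq0},D_p^{\geq1})=0$ follows from Lemma~\ref{lem:split-derived}, because the index conditions $a\geq p(S)>b$ exclude $a=b$.
\item Truncation triangles come from splitting any object $\bigoplus S_i[a_i]$ into the summands with $a_i\geq p(S_i)$ and the rest.
\item Boundedness holds since each object involves finitely many shifted simples.
\end{itemize}
The coaisle and heart formulas then follow by shifting.
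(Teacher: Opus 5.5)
Your proof is correct and has the same overall structure as the paper's: simple shifts are partitioned between aisle and coaisle, upward closure plus boundedness produces $p(S)$, and the converse uses split truncations. The only difference is the partition step. The paper argues by cases on whether $U\to S[a]$ is zero (a nonzero map is a split epimorphism, and a zero map makes $S[a]$ split off $V$). You instead show that the connecting map $\tau^{\geq1}X\to\tau^{\leq0}X[1]$ always vanishes, so every truncation triangle splits, and then the indecomposability of $S[a]$ finishes the argument. Both versions rest on the same Hom-vanishing from Lemma~\ref{lem:split-derived}.
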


\begin{proof}
Let $(\mathcal U,\mathcal V)$ be a bounded $t$-structure, with $\mathcal U=D^{\leq0}$ and $\mathcal V=D^{\geq1}$. Fix a simple shift $S[a]$ and consider its truncation triangle
\[
U\rightarrow S[a]\rightarrow V\rightarrow U[1],\quad U\in\mathcal U,\quad V\in\mathcal V.
\]
Suppose that the first map is nonzero. By Lemma~\ref{lem:split-derived}, its restriction to some summand of $U$ isomorphic to $S[a]$ is nonzero. This restriction is an isomorphism, so the first map is a split epimorphism. Hence $S[a]\in\mathcal U$. If the first map is zero, then $V\cong S[a]\oplus U[1]$. Thus $S[a]\in\mathcal V$. Orthogonality prevents $S[a]$ from belonging to both $\mathcal U$ and $\mathcal V$. These two subcategories therefore partition the simple shifts.

For fixed $S$, the set $\{a:S[a]\in\mathcal U\}$ is upward closed because $\mathcal U[1]\subseteq\mathcal U$. Boundedness makes this set nonempty and proper. It is therefore equal to $\{a:a\geq p(S)\}$ for a unique integer $p(S)$. The descriptions of $\mathcal V$, the coaisle, and the heart follow.

Conversely, define the two subcategories by the displayed formulas. Their shift closure and orthogonality follow from Lemma~\ref{lem:split-derived}. For any object, separate the summands in its decomposition into those with $a\geq p(S)$ and those with $a<p(S)$. The associated split triangle is a truncation triangle. Each object has only finitely many summands, so the $t$-structure is bounded. The function $p$ itself need not be bounded.
\end{proof}

By Proposition~\ref{prop:semisimple-t},
\begin{equation}\label{eq:shifted-halves}
S[a]\in D_p^{\leq m}\Longleftrightarrow a\geq p(S)-m,\quad S[a]\in D_p^{\geq m}\Longleftrightarrow a\leq p(S)-m.
\end{equation}

\subsection{Monoidal \texorpdfstring{$t$}{t}-structures}\label{subsec:monoidal-classification}

We use the notion of Xu and Zheng \cite[Definition~2.10]{XuZheng}.

\begin{definition}\label{def:monoidal-t-structure}
Let $(\T,\otimes,\one)$ be a monoidal triangulated category. Let $t=(\T^{\leq0},\T^{\geq1})$ be a bounded $t$-structure on $\T$. The $t$-structure $t$ is \emph{monoidal} if there exists $d\in\mathbb Z$ such that
\begin{equation}\label{eq:monoidal-t}
\T^{\leq0}\otimes\T^{\leq d}\subseteq\T^{\leq0},\quad \T^{\geq0}\otimes\T^{\geq d}\subseteq\T^{\geq0}.
\end{equation}
Its \emph{deviation} $\dev(t)$ is the set of all integers $d$ satisfying \eqref{eq:monoidal-t}. A monoidal $t$-structure is \emph{normalized} if $0\in\dev(t)$.
\end{definition}

Every monoidal $t$-structure becomes normalized after a shift, and a normalized monoidal $t$-structure has deviation $\{0\}$ \cite[Lemma~2.11 and Proposition~2.15]{XuZheng}.

For simple objects $X,Y,Z$ of $\C$, write $Z\leq X\otimes Y$ when $Z$ is a direct summand of $X\otimes Y$. The universal grading group $U_{\C}$ has a degree map $\deg:\Irr(\C)\to U_{\C}$, which satisfies $\deg Z=\deg X\deg Y$ whenever $Z\leq X\otimes Y$. Let $G$ be a group and let $u:\Irr(\C)\to G$ satisfy the same relation. Then there is a unique homomorphism $\overline u: U_{\C}\to G$ such that $u(X)=\overline u(\deg X)$. This universal property is proved for semisimple tensor categories in \cite[Lemma~2.1]{ENOtwisted}; see also \cite[Section~4.14]{EGNO}.

\begin{theorem}\label{thm:monoidal-classification}
There is a bijection 
\begin{equation*}
    \MT(D^b(\C)) \longleftrightarrow \Hom(U_{\C},\mathbb Z)\times\mathbb Z.
\end{equation*}
More precisely, suppose that $t_p$ is monoidal and $d\in\dev(t_p)$. Then there is a unique homomorphism $\chi: U_{\C}\to\mathbb Z$ such that
\begin{equation}\label{eq:p-character-d}
p(X)=\chi(\deg X)+d
\end{equation}
for every simple object $X$. Conversely, every pair $(\chi,d)$ defines a monoidal $t$-structure by this formula with deviation $\{d\}$.
\end{theorem}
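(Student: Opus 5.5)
By Proposition~\ref{prop:semisimple-t}, every bounded $t$-structure on $D^b(\C)$ is $t_p$ for a unique $p:\Irr(\C)\to\mathbb Z$. The plan is to translate the two inclusions \eqref{eq:monoidal-t} into a condition on $p$ through the tensor decomposition of simple objects, and then invoke the universal property of $U_{\C}$. Since $D^b(\C)$ splits (Lemma~\ref{lem:split-derived}), the derived tensor product satisfies $X[a]\otimes Y[b]\cong (X\otimes Y)[a+b]\cong\bigoplus_{Z\leq X\otimes Y} Z^{\oplus N_{XY}^Z}[a+b]$. Both aisles and coaisles are additive and closed under summands. Hence \eqref{eq:monoidal-t} holds if and only if it holds on simple shifts.

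First I will use \eqref{eq:shifted-halves}. The first inclusion says: if $a\geq p(X)$ and $b\geq p(Y)-d$, then $a+b\geq p(Z)$ for every $Z\leq X\otimes Y$. Taking the extreme values $a=p(X)$ and $b=p(Y)-d$, this becomes $p(X)+p(Y)-d\geq p(Z)$, and conversely this inequality gives the inclusion. In the same way, the second inclusion is equivalent to $p(X)+p(Y)-d\leq p(Z)$. So $d\in\dev(t_p)$ exactly when
\[
p(Z)=p(X)+p(Y)-d\quad\text{whenever }Z\leq X\otimes Y.
\]
Now put $u(X)=p(X)-d$. The displayed condition becomes $u(Z)=u(X)+u(Y)$ for $Z\leq X\otimes Y$. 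By the universal property of $U_{\C}$ with $G=\mathbb Z$, there is a unique homomorphism $\chi$ with $u=\chi\circ\deg$, which is \eqref{eq:p-character-d}. The homomorphism $\chi$ is unique because $\deg$ generates $U_{\C}$; this is also part of the uniqueness in the universal property. Conversely, given $(\chi,d)$, the function $p=\chi\circ\deg+d$ satisfies the displayed condition because $\deg Z=\deg X\deg Y$. Therefore $t_p$ is monoidal and $d\in\dev(t_p)$.

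The remaining points are that the deviation is exactly $\{d\}$ and that the map is a bijection. I expect these to need the most care. Take $X=Y=Z=\one$, so $\one\leq\one\otimes\one$. If $d'\in\dev(t_p)$, then $p(\one)=2p(\one)-d'$, so $d'=p(\one)$. With $\deg\one=e$ and $\chi(e)=0$, this gives $p(\one)=d$. Hence $\dev(t_p)=\{d\}$, and $d$ is determined by $p$. This also settles the bijection. The map $t\mapsto(\chi,d)$ is well defined, since $d$ is the unique deviation and $\chi$ is unique for that $d$. The map $(\chi,d)\mapsto t_{\chi\circ\deg+d}$ is its inverse: $p$ determines $t_p$ by Proposition~\ref{prop:semisimple-t}, and $(\chi,d)$ is recovered from $p$ by the argument above. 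The normalized subset then corresponds to $d=0$.
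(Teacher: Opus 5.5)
Your proposal is correct and follows essentially the same route as the paper: reduce to simple shifts, use the extreme objects $X[p(X)]$ and $Y[p(Y)-d]$ with \eqref{eq:shifted-halves} to get $p(Z)=p(X)+p(Y)-d$ whenever $Z\leq X\otimes Y$, apply the universal property of $U_{\C}$ to $p-d$, and pin down the deviation by taking $X=Y=Z=\one$. Your explicit check that $t\mapsto(\chi,d)$ and $(\chi,d)\mapsto t_{\chi\circ\deg+d}$ are mutually inverse spells out what the paper leaves implicit.
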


\begin{proof}
Suppose that $d\in\dev(t_p)$, and let $Z\leq X\otimes Y$ for simple objects $X,Y,Z$. The objects $X[p(X)]$ and $Y[p(Y)-d]$ lie in $D_p^{\leq0}$ and $D_p^{\leq d}$, respectively. The first inclusion in \eqref{eq:monoidal-t} places the summand $Z[p(X)+p(Y)-d]$ in $D_p^{\leq0}$. Thus $p(X)+p(Y)-d\geq p(Z)$. The same objects also lie in $D_p^{\geq0}$ and $D_p^{\geq d}$, respectively. The second inclusion gives the reverse inequality. Hence
\begin{equation}\label{eq:fusion-p}
p(Z)=p(X)+p(Y)-d.
\end{equation}
Taking $X=Y=Z=\one$ gives $p(\one)=d$. Thus $X\mapsto p(X)-d$ is additive on simple summands of tensor products. The universal property of $U_{\C}$ gives a unique $\chi: U_{\C}\to\mathbb Z$ satisfying \eqref{eq:p-character-d}.

Conversely, fix $(\chi,d)$ and define $p$ by \eqref{eq:p-character-d}. If $X[a]\in D_p^{\leq0}$ and $Y[b]\in D_p^{\leq d}$, then $a\geq p(X)$ and $b\geq p(Y)-d$. Every simple summand $Z$ of $X\otimes Y$ satisfies $p(Z)=p(X)+p(Y)-d$. Hence $a+b\geq p(Z)$ and $Z[a+b]\in D_p^{\leq0}$. Taking finite direct sums gives the first inclusion in \eqref{eq:monoidal-t}. For $X[a]\in D_p^{\geq0}$ and $Y[b]\in D_p^{\geq d}$, the reverse inequalities give the second inclusion. Thus $d\in\dev(t_p)$. If $e$ is another deviation, the first part of the proof gives $e=p(\one)=d$.
\end{proof}

\begin{corollary}\label{cor:normalized-classification}
There is a bijection
\begin{equation*}
    \MT_0(D^b(\C)) \longleftrightarrow \Hom(U_{\C},\mathbb Z). 
\end{equation*}
The structure associated with $\chi$ has shift function $p(X)=\chi(\deg X)$ and heart
\[
\Hh_\chi=\add\{X[\chi(\deg X)]:X\in\Irr(\C)\}.
\]
\end{corollary}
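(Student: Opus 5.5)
This follows from Theorem~\ref{thm:monoidal-classification} by restricting to the pairs with $d=0$, together with the description of hearts in Proposition~\ref{prop:semisimple-t}. The plan has three steps.

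First, identify $\MT_0(D^b(\C))$ inside $\MT(D^b(\C))$. By Definition~\ref{def:monoidal-t-structure}, a monoidal $t$-structure is normalized exactly when $0\in\dev(t)$. Under the bijection of Theorem~\ref{thm:monoidal-classification}, the structure attached to $(\chi,d)$ has deviation exactly $\{d\}$. Hence it is normalized if and only if $d=0$. So the bijection restricts to a bijection between $\MT_0(D^b(\C))$ and $\Hom(U_{\C},\mathbb Z)\times\{0\}\cong\Hom(U_{\C},\mathbb Z)$.

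Second, check the inverse direction explicitly. Suppose $t_p$ is normalized. Apply the first part of the proof of Theorem~\ref{thm:monoidal-classification} with $d=0\in\dev(t_p)$. This yields a unique $\chi$ with $p(X)=\chi(\deg X)$, and it also gives $p(\one)=0$. Conversely, each $\chi$ determines the function $p=\chi\circ\deg$, and the converse part of the theorem shows that $t_p$ is monoidal with deviation $\{0\}$. Injectivity holds because a bounded $t$-structure is determined by its shift function (Proposition~\ref{prop:semisimple-t}), and $\chi$ is determined by $p$ through the universal property of $U_{\C}$.

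Third, the heart. Proposition~\ref{prop:semisimple-t} gives $\Hh_p=\add\{X[p(X)]:X\in\Irr(\C)\}$. Substituting $p(X)=\chi(\deg X)$ gives the displayed formula for $\Hh_\chi$.

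There is no real obstacle. The only point needing care is that normalization forces $d=0$, and this rests on the deviation being the singleton $\{d\}$, which was already shown in Theorem~\ref{thm:monoidal-classification}.
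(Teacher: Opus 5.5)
Your proposal is correct and follows the paper's own argument: the deviation of the structure attached to $(\chi,d)$ is the singleton $\{d\}$, so normalization forces $d=0$, and the heart formula is read off from Proposition~\ref{prop:semisimple-t}. Your second step re-checks what the restricted bijection of Theorem~\ref{thm:monoidal-classification} already gives; it is redundant but harmless.
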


\begin{proof}
Normalization is equivalent to $d=0$ in Theorem~\ref{thm:monoidal-classification}. Hence the shift function is $p(X)=\chi(\deg X)$. The description of the heart then follows from Proposition~\ref{prop:semisimple-t}.
\end{proof}

\subsection{Uniform comparison of \texorpdfstring{$t$}{t}-structures}\label{subsec:uniform-comparison}

Two bounded $t$-structures $t$ and $t'$ are \emph{equivalent} if there are integers $m\leq n$ such that
\[
D_t^{\leq m}\subseteq D_{t'}^{\leq0}\subseteq D_t^{\leq n}.
\]

\begin{proposition}\label{prop:uniform-comparison}
Let $p,q:\Irr(\C)\to\mathbb Z$. The bounded $t$-structures $t_p$ and $t_q$ are equivalent if and only if $p-q$ is bounded on $\Irr(\C)$.
\end{proposition}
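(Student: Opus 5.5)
The proof will translate the inclusions between aisles into inequalities between shift functions, using the description \eqref{eq:shifted-halves} from Proposition~\ref{prop:semisimple-t}. The key observation is that both aisles are additive closures of simple shifts. So an inclusion $D_p^{\leq m}\subseteq D_q^{\leq0}$ holds if and only if every simple shift in the left-hand side lies in the right-hand side. By Lemma~\ref{lem:split-derived}, an aisle is closed under summands and finite sums, and every object is a finite sum of simple shifts.

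First I will prove the reformulation: for $m\in\mathbb Z$,
\[
D_p^{\leq m}\subseteq D_q^{\leq0}\iff q(X)\leq p(X)-m\ \text{for all } X\in\Irr(\C).
\]
By \eqref{eq:shifted-halves}, $X[a]\in D_p^{\leq m}$ means $a\geq p(X)-m$, and $X[a]\in D_q^{\leq 0}$ means $a\geq q(X)$. The inclusion of the upward rays $\{a\geq p(X)-m\}\subseteq\{a\geq q(X)\}$ is equivalent to $p(X)-m\geq q(X)$. Sufficiency follows by taking finite direct sums. For necessity, test with the single object $X[p(X)-m]$. In the same way,
\[
D_q^{\leq0}\subseteq D_p^{\leq n}\iff p(X)-n\leq q(X)\ \text{for all } X.
\]

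Combining these, $t_p$ and $t_q$ are equivalent if and only if there are integers $m\leq n$ with
\[
m\leq p(X)-q(X)\leq n\quad\text{for all } X.
\]
This is exactly the statement that $p-q$ is bounded. Conversely, if $p-q$ is bounded, I will take $m=\inf(p-q)$ and $n=\sup(p-q)$. These are integers with $m\leq n$, since $\Irr(\C)$ is nonempty because it contains $\one$.

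The only point needing care is the necessity direction in the reformulation. It relies on $X[p(X)-m]$ genuinely lying in $D_p^{\leq m}$, which is immediate from \eqref{eq:shifted-halves}, so I do not expect a real obstacle. The rest is bookkeeping with the sign convention $D^{\leq n}=D^{\leq0}[-n]$.
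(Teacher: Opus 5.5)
Your proposal is correct and follows the paper's proof: both translate the two aisle inclusions, via \eqref{eq:shifted-halves}, into the pointwise inequalities $m\leq p(S)-q(S)\leq n$ for all $S\in\Irr(\C)$. You also spell out the details the paper leaves implicit, namely testing necessity on the single object $S[p(S)-m]$ and taking $m=\inf(p-q)$, $n=\sup(p-q)$ for the converse.
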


\begin{proof}
By \eqref{eq:shifted-halves}, the inclusions $D_p^{\leq m}\subseteq D_q^{\leq0}\subseteq D_p^{\leq n}$ hold precisely when $m\leq p(S)-q(S)\leq n$ for every $S\in\Irr(\C)$.
\end{proof}

\begin{corollary}\label{cor:characters-not-comparable}
Distinct characters $\chi,\psi\in\Hom(U_{\C},\mathbb Z)$ define inequivalent normalized monoidal $t$-structures.
\end{corollary}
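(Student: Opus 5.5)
By Corollary~\ref{cor:normalized-classification}, the normalized structure attached to $\chi$ is $t_p$ with $p(X)=\chi(\deg X)$, and the one attached to $\psi$ is $t_q$ with $q(X)=\psi(\deg X)$. By Proposition~\ref{prop:uniform-comparison}, these are equivalent exactly when the function
\[
p-q\colon X\mapsto(\chi-\psi)(\deg X)
\]
is bounded on $\Irr(\C)$. So I would show that for $\chi\neq\psi$ this function is unbounded.

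Put $\phi=\chi-\psi$, which is a nonzero homomorphism $U_{\C}\to\mathbb Z$. Its image is a nonzero subgroup of $\mathbb Z$, so it is unbounded. Choose $g\in U_{\C}$ with $\phi(g)\neq0$; then $\phi(g^n)=n\phi(g)$ is unbounded as $n$ ranges over $\mathbb Z$.

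It remains to realize each $g^n$ as the degree of a simple object. I would invoke the standard fact that $\deg\colon\Irr(\C)\to U_{\C}$ is surjective, so that $U_{\C}$ is generated by degrees and every element is the degree of some simple object. This is the one step needing care. If the cited universal property does not state surjectivity outright, I would argue directly. The set of degrees of simple objects contains $1=\deg\one$ and is closed under inverses, since $\deg X^*=(\deg X)^{-1}$ because $\one\leq X^*\otimes X$. It is also closed under products: $X\otimes Y\neq0$ in a tensor category, so it has some simple summand $Z$, and then $\deg Z=\deg X\deg Y$. Hence this set is a subgroup, and by the universal property (uniqueness of homomorphisms agreeing on degrees) it is all of $U_{\C}$.

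With surjectivity in hand, pick for each $n$ a simple object $X_n$ with $\deg X_n=g^n$. Then $(p-q)(X_n)=n\phi(g)$ is unbounded. By Proposition~\ref{prop:uniform-comparison}, $t_p$ and $t_q$ are inequivalent.
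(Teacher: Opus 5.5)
Your proof is correct and follows the paper's argument: choose $g$ with $(\chi-\psi)(g)\neq0$, realize each $g^r$ as the degree of a simple object so that $p-q$ is unbounded, and apply Proposition~\ref{prop:uniform-comparison}. The only difference is that the paper just asserts that the universal grading is faithful, whereas you prove surjectivity of $\deg$ yourself. Your derivation is valid once the final step is read as follows: existence applied to the subgroup $H$ of degrees gives $\overline u\colon U_{\C}\to H$, and uniqueness then forces $\iota\circ\overline u=\id_{U_{\C}}$, where $\iota\colon H\to U_{\C}$ is the inclusion.
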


\begin{proof}
Let $\delta=\chi-\psi$ and choose $g\in U_{\C}$ with $\delta(g)\ne0$. Then $g$ has infinite order. The universal grading is faithful. Thus, for each $r\in\mathbb Z$, there is a simple object $S_r$ of degree $g^r$. The function $(p_\chi-p_\psi)(S_r)=r\delta(g)$ is unbounded. The result follows from Proposition~\ref{prop:uniform-comparison}.
\end{proof}

When $\Irr(\C)$ is finite, any two shift functions differ by a bounded function. This is the finiteness step in \cite[Lemma~4.3]{XuZheng} and \cite[Lemma~4.1]{Yu}. The reconstruction argument in \cite{XuZheng} then uses the uniqueness theorem for equivalent tensor reduced monoidal $t$-structures \cite[Theorem~2.23]{XuZheng}. The next lemma shows that $D^b(\C)$ is tensor reduced.

\begin{lemma}\label{lem:tensor-reduced}
If $0\ne K\in D^b(\C)$, then $K\otimes K\ne0$.
\end{lemma}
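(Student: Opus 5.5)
By Lemma~\ref{lem:split-derived}, we can write $K\cong\bigoplus_{i} S_i[a_i]$ as a nonempty finite direct sum with $S_i\in\Irr(\C)$. The derived tensor product is additive in each variable and commutes with shifts up to isomorphism. Hence $K\otimes K$ contains $S_1[a_1]\otimes S_1[a_1]\cong(S_1\otimes S_1)[2a_1]$ as a direct summand. It therefore suffices to show that $X\otimes X\neq0$ for a single simple object $X$ of $\C$, viewed as a stalk complex. Since $\C$ is semisimple, the tensor product on $D^b(\C)$ is computed termwise on stalk complexes. So $X\otimes X$ in $D^b(\C)$ is the stalk complex of $X\otimes X\in\C$, and it is enough to prove $X\otimes X\neq0$ in $\C$.

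For the key step, I would use rigidity rather than any finiteness of $\Irr(\C)$. Every object of $\C$ is nonzero exactly when its length is positive. The functor $X\otimes-$ has an adjoint given by tensoring with a dual, so $\Hom(X\otimes Y,Z)\cong\Hom(Y,X^*\otimes Z)$ for the appropriate dual. Suppose $X\otimes X=0$. Then $X^{*}\otimes X\otimes X=0$. But $\one$ is a direct summand of $X^*\otimes X$: the evaluation $X^*\otimes X\to\one$ is nonzero, since by the zigzag identity $(\id_X\otimes\mathrm{ev})\circ(\mathrm{coev}\otimes\id_X)=\id_X\neq0$. In a semisimple category, a nonzero map onto the simple object $\one$ splits. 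Tensoring on the right with $X$ then makes $X\cong\one\otimes X$ a direct summand of $X^*\otimes X\otimes X=0$. This contradicts $X\neq0$.

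Alternatively, one can argue with Frobenius--Perron or categorical dimensions, but those require finiteness hypotheses that may fail here. The rigidity argument avoids them, so I would use it.

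I expect the main obstacle to be the bookkeeping in the first paragraph: checking that the monoidal structure on $D^b(\C)$ really restricts to the abelian tensor product on stalk complexes, and that the shift compatibility isomorphisms (with Koszul signs) do not affect whether an object is zero. Both points are harmless, because being nonzero is invariant under isomorphism and shifts. The rigidity step is standard, and the convention $X^*\otimes X\to\one$ fixed in Subsection~\ref{subsec:bounded-t-structures} fits it directly.
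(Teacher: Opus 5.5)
Your proof is correct. Its core input is the same as the paper's: the evaluation $X^*\otimes X\to\one$ is nonzero by the zigzag identity, hence an epimorphism onto the simple unit. The two steps around it are organized differently.

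To pass from $D^b(\C)$ to $\C$, the paper does not split $K$. It takes the largest $q$ with $H^q(K)\neq0$ and uses exactness of $\otimes$ to get $H^{2q}(K\otimes K)\cong H^q(K)\otimes H^q(K)$. You instead use Lemma~\ref{lem:split-derived} to exhibit $(S_1\otimes S_1)[2a_1]$ as a direct summand of $K\otimes K$.

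In $\C$, the paper proves the more general fact that $A\otimes Y\neq0$ for all nonzero $A,Y$. It observes that $\{B:B\otimes Y=0\}$ is a left Serre tensor ideal, which would then contain $A^*\otimes A$ and hence its quotient $\one$. You instead split the evaluation and realize $X$ as a summand of $X^*\otimes X\otimes X$.

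Your version is concrete and fully adequate in the semisimple setting. The paper's version leans less on semisimplicity: its abelian step uses only exactness and rigidity, and its reduction uses top cohomology rather than a complete splitting of $K$. The splitting in your key step is not actually needed. Since $-\otimes X$ is exact, $\mathrm{ev}\otimes\id_X\colon (X^*\otimes X)\otimes X\to\one\otimes X\cong X$ is already an epimorphism. That is exactly the paper's Serre-ideal argument specialized to $A=Y=X$. The adjunction you quote plays no role and can be omitted.
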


\begin{proof}
First let $A,Y$ be nonzero objects of $\C$. For fixed $Y$, the objects $B$ with $B\otimes Y=0$ form a left Serre tensor ideal. If this ideal contained a nonzero object $A$, it would also contain $A^*\otimes A$. The evaluation $A^*\otimes A\to\one$ is nonzero by the triangle identity. It is an epimorphism because $\one$ is simple. The ideal would then contain $\one$, which contradicts $Y\ne0$. Thus $A\otimes Y\ne0$.

Let $q$ be the largest integer with $H^q(K)\ne0$. Exactness of tensor product and semisimplicity give
\[
H^{2q}(K\otimes K)\cong H^q(K)\otimes H^q(K)\ne0.
\]
Thus $K\otimes K\ne0$.
\end{proof}

Thus normalized monoidal $t$-structures on a tensor reduced category can be inequivalent. Theorem~\ref{thm:tensor-regrading} below identifies their hearts directly.

\section{Compatible \texorpdfstring{$t$}{t}-structures and module components}\label{sec:compatible-t-str}


The exact action $\C\times\M\to\M$ induces a triangulated action on bounded derived categories. We use the following total complex.

\begin{lemma}\label{lem:derived-action}
For bounded complexes $K\in C^b(\C)$ and $L\in C^b(\M)$, set
\[
(K\odot L)^r=\bigoplus_{i+j=r}K^i\odot L^j,
\]
with differential
\begin{equation}\label{eq:total-action-differential}
d(x\odot y)=d_Kx\odot y+(-1)^i x\odot d_Ly,\quad x\in K^i.
\end{equation}
This construction induces a triangulated action $D^b(\C)\times D^b(\M)\to D^b(\M)$. The module associator acts degreewise, with no additional sign.
\end{lemma}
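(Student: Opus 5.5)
The plan is to follow the standard construction of the derived tensor product of complexes, adapted to the bifunctor $\odot\colon\C\times\M\to\M$. First, check that the displayed formula defines a bounded complex in $\M$. Each $K^i\odot L^j$ is an object of $\M$, and only finitely many are nonzero. The sign in \eqref{eq:total-action-differential} gives $d^2=0$: the cross terms $d_Kx\odot d_Ly$ appear with signs $(-1)^{i+1}$ and $(-1)^i$ and cancel, while $d_K^2=0$ and $d_L^2=0$ kill the remaining terms. Functoriality in both variables follows from the bifunctoriality of $\odot$, so we obtain a bifunctor $C^b(\C)\times C^b(\M)\to C^b(\M)$.

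Next, show that this bifunctor descends to the homotopy categories and then to the derived categories.
\begin{enumerate}
\item If $h$ is a homotopy between maps $K\to K'$, then $h\odot\id$, with the sign $(-1)^i$ inserted on the component $K^i\odot L^j$ only as the Koszul rule demands, is a homotopy between the induced maps. The same holds in the second variable.
\item To pass to $D^b$, it suffices that $K\odot-$ and $-\odot L$ preserve quasi-isomorphisms. Equivalently, they preserve acyclic complexes, since total complexes commute with mapping cones up to canonical isomorphism.
\item Because $\C$ and $\M$ are semisimple, every bounded acyclic complex is contractible: all short exact sequences split, so the complex is a finite sum of cones of identities. Contractibility is preserved by step 1, so the total complex is contractible and hence acyclic.
\end{enumerate}
Alternatively, one can use exactness of $\odot$ in each variable and a finite filtration argument. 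Either way, $\odot$ descends to $D^b(\C)\times D^b(\M)\to D^b(\M)$.

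The triangulated structure in each variable is the step requiring care with signs.
\begin{enumerate}
\item In the second variable, $K\odot(L[1])$ and $(K\odot L)[1]$ agree degreewise. The shift sign $(-1)$ on $d_L$ together with the Koszul sign must be matched by the isomorphism $x\odot y\mapsto(-1)^i x\odot y$ for $x\in K^i$.
\item In the first variable, the identity map works: $(K[1])\odot L\cong (K\odot L)[1]$ with no sign.
\item The functor $-\odot L$ sends the cone of $f$ to the cone of $f\odot\id$ termwise. The same holds for $K\odot -$ after composing with the sign isomorphism from item 1.
\item Since distinguished triangles are those isomorphic to cone triangles, both partial functors are triangulated.
\end{enumerate}
I expect the main obstacle to be checking that these shift isomorphisms are compatible with each other, so that they anticommute in the usual way for triangulated bifunctors.

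Finally, take the associator $K\odot(K'\odot L)\cong(K\otimes K')\odot L$ to be, componentwise, the module associator of $\M$ on $K^i\odot(K'^{j}\odot L^l)$. Here $D^b(\C)$ carries the analogous total-complex tensor product. To see that no sign is needed, compare the two total differentials on $x\otimes x'\otimes y$. Both produce the signs $1$, $(-1)^i$ and $(-1)^{i+j}$ on the three terms, so the degreewise associator is a chain isomorphism. Naturality of the module associator then gives naturality in $D^b$. The pentagon and unit axioms hold componentwise because they hold in $\M$.
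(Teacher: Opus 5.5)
Your proposal is correct and follows essentially the same route as the paper: use semisimplicity to reduce to preservation of contractible complexes via the contracting homotopies $h(x)\odot y$ and $(-1)^i x\odot h(y)$, then show the degreewise associator is a chain map by comparing the signs $1$, $(-1)^i$, $(-1)^{i+j}$. You also verify $d^2=0$, the shift isomorphisms (identity in the first variable, $(-1)^i$ in the second) and compatibility with cones, which the paper's proof does not record. The anticommutation you flag is immediate: on $x\in K[1]^i=K^{i+1}$ the two composites $K[1]\odot L[1]\to(K\odot L)[2]$ carry the signs $(-1)^{i+1}$ and $(-1)^{i}$, so they differ by exactly $-1$.
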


\begin{proof}
All sums are finite, so the total complex is well defined. Since every bounded acyclic complex in a semisimple abelian category is contractible, it is enough to check that the action preserves contractible complexes. 
If $h$ contracts $K$, define a homotopy $\tilde h$ on $K\odot L$ by $\tilde{h}(x\odot y)=h(x)\odot y$. Then $\tilde{h}$ contracts $K\odot L$. Similarly, if $h$ contracts $L$, then $\tilde{h}(x\odot y)=(-1)^i x\odot h(y)$ contracts $K\odot L$. Therefore the total action preserves quasi-isomorphisms in each variable and descends to the derived categories.

It remains to check the module structure. For $K_1,K_2\in C^b(\C)$ and $L\in C^b(\M)$, apply the module associator degreewise. On $K_1^i\odot K_2^j\odot L^r$, the three terms of the differential have coefficients $1$, $(-1)^i$, and $(-1)^{i+j}$ for either bracketing. Hence the associator is a chain map. Finally, the pentagon and unit diagrams reduce to the original diagrams in each total degree.
\end{proof}

\subsection{Compatible \texorpdfstring{$t$}{t}-structures}\label{subsec:compatible-t-structures}

Let $t_q$ be a monoidal $t$-structure on $D^b(\C)$ and let $t_p$ be a bounded $t$-structure on $D^b(\M)$. If $t_q$ is normalized, then $\Hh_q$ is tensor closed \cite[Proposition~2.13]{XuZheng}, so Yu's compatibility notion applies \cite[Definition~3.1]{Yu}.

\begin{definition}\label{def:compatible-module-product}
Suppose that $t_q$ is normalized. The module product bifunctor is \emph{compatible with $t_q$ and $t_p$} if
\[
\Hh_q\odot\Hh_p\subseteq\Hh_p.
\]
\end{definition}

For module deviation we use Yu's relative notion \cite[Definition~3.4]{Yu}.

\begin{definition}\label{def:triangulated-module-t}
The bounded $t$-structure $t_p$ is a \emph{triangulated module $t$-structure with respect to $t_q$} if there exists $m\in\mathbb Z$ such that
\begin{equation}\label{eq:module-deviation}
D_q^{\leq0}\odot D_p^{\leq m}\subseteq D_p^{\leq0},\quad D_q^{\geq0}\odot D_p^{\geq m}\subseteq D_p^{\geq0}.
\end{equation}
The set of all such integers is the \emph{deviation of $t_p$ with respect to $t_q$}, denoted by $\dev_{t_q}(t_p)$.
\end{definition}

By Theorem~\ref{thm:monoidal-classification}, the shift function of $t_q$ has the form
\begin{equation}\label{eq:ambient-shift}
q(X)=\chi(\deg X)+d.
\end{equation}

\begin{theorem}\label{thm:module-compatibility}
Assume that $\M$ is nonzero. Let $p:\Irr(\M)\to\mathbb Z$ determine a bounded $t$-structure on $D^b(\M)$. An integer $m$ belongs to $\dev_{t_q}(t_p)$ if and only if
\begin{equation}\label{eq:compatibility-general}
p(P)=q(X)+p(M)-m
\end{equation}
whenever $X\in\Irr(\C)$, $M,P\in\Irr(\M)$, and $P$ is a direct summand of $X\odot M$. Such an integer exists if and only if
\begin{equation}\label{eq:compatibility}
p(P)=p(M)+\chi(\deg X)
\end{equation}
for every such triple. In this case, $\dev_{t_q}(t_p)=\{d\}$. When $d=0$, relation \eqref{eq:compatibility} is also equivalent to $\Hh_\chi\odot\Hh_p\subseteq\Hh_p$.
\end{theorem}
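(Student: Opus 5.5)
The plan follows the pattern of the proof of Theorem~\ref{thm:monoidal-classification}: test the inclusions \eqref{eq:module-deviation} on simple shifts, and use Lemma~\ref{lem:split-derived} and \eqref{eq:shifted-halves} to turn them into inequalities between shift functions.

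For the first equivalence, suppose $m\in\dev_{t_q}(t_p)$, and let $P$ be a direct summand of $X\odot M$ with $X\in\Irr(\C)$ and $M,P\in\Irr(\M)$. The object $X[q(X)]$ lies in $D_q^{\leq0}\cap D_q^{\geq0}$. The object $M[p(M)-m]$ lies in $D_p^{\leq m}\cap D_p^{\geq m}$. By Lemma~\ref{lem:derived-action}, their action is $(X\odot M)[q(X)+p(M)-m]$ up to isomorphism. This object has $P[q(X)+p(M)-m]$ as a direct summand, and both aisles and coaisles are closed under summands. The first inclusion then gives $q(X)+p(M)-m\geq p(P)$, and the second gives the reverse inequality. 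This yields \eqref{eq:compatibility-general}.

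Conversely, assume \eqref{eq:compatibility-general}. Take $X[a]\in D_q^{\leq0}$ and $M[b]\in D_p^{\leq m}$, so that $a\geq q(X)$ and $b\geq p(M)-m$. Every simple summand $P$ of $X\odot M$ then satisfies $a+b\geq p(P)$. Hence $(X\odot M)[a+b]\in D_p^{\leq0}$. Biadditivity of the action and closure under finite sums give the first inclusion. The coaisle inclusion follows by the same argument with the inequalities reversed.

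For the existence statement, I will use $q(X)=\chi(\deg X)+d$. In this form \eqref{eq:compatibility-general} reads $p(P)=p(M)+\chi(\deg X)+d-m$. If $m=d$, this is exactly \eqref{eq:compatibility}, so \eqref{eq:compatibility} implies existence of a module deviation. For the other direction, take $X=\one$. Since $\M$ is nonzero, there is some simple $M$. Since $\one\odot M\cong M$, we get the triple $P=M$. Because $\deg\one$ is the identity of $U_{\C}$, we have $\chi(\deg\one)=0$. The relation then forces $0=d-m$, so $m=d$. This proves both that \eqref{eq:compatibility} holds and that $\dev_{t_q}(t_p)=\{d\}$. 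This unit-object step is the only point where nonzeroness of $\M$ is used, and I expect it to be the key, though easy, step.

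For the heart statement with $d=0$, I will argue in both directions. First assume \eqref{eq:compatibility}. Then $X[\chi(\deg X)]\odot M[p(M)]\cong(X\odot M)[\chi(\deg X)+p(M)]$. Every summand of this object has the form $P[p(P)]$, so it lies in $\Hh_p$. Closure under finite sums then gives $\Hh_\chi\odot\Hh_p\subseteq\Hh_p$. Conversely, assume the inclusion of hearts. Then each summand $P[\chi(\deg X)+p(M)]$ lies in $\Hh_p$. By Lemma~\ref{lem:split-derived} and Proposition~\ref{prop:semisimple-t}, $P[a]$ lies in $\Hh_p$ only if $a=p(P)$. This gives \eqref{eq:compatibility}.

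The only care needed beyond this is that the Koszul signs of the total complex do not affect which objects appear. This holds because objects are determined up to isomorphism as sums of simple shifts.
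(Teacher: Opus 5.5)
Your proposal is correct and follows essentially the same route as the paper. Both test the two inclusions on the simple shifts $X[q(X)]$ and $M[p(M)-m]$, use $X=\one$, $P=M$ to force $m=q(\one)=d$, and read off the heart statement from the uniqueness of the shift of each simple object in $\Hh_p$. The only small difference is that you prove the converse of the first equivalence directly for an arbitrary $m$, whereas the paper checks it only for $m=d$ after the unit-object reduction.
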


\begin{proof}
Suppose first that $m$ satisfies \eqref{eq:module-deviation}. We have $X[q(X)]\in D_q^{\leq0}\cap D_q^{\geq0}$ and $M[p(M)-m]\in D_p^{\leq m}\cap D_p^{\geq m}$. Hence
\[
X[q(X)]\odot M[p(M)-m]\cong(X\odot M)[q(X)+p(M)-m]
\]
lies in $\Hh_p$. The heart is closed under direct summands, so $P[q(X)+p(M)-m]\in\Hh_p$. The unique shift of $P$ in the heart is $P[p(P)]$. This proves \eqref{eq:compatibility-general}.

Taking $X=\one$ and $P=M$ gives $m=q(\one)=d$. Substitution gives \eqref{eq:compatibility}.

Conversely, assume \eqref{eq:compatibility}. It is enough to check \eqref{eq:module-deviation} on simple shifts. If $X[a]\in D_q^{\leq0}$ and $M[b]\in D_p^{\leq d}$, then $a\geq q(X)$ and $b\geq p(M)-d$. For every simple summand $P$ of $X\odot M$,
\[
a+b\geq q(X)+p(M)-d=p(M)+\chi(\deg X)=p(P),
\]
so $P[a+b]\in D_p^{\leq0}$. The reverse inequalities give the coaisle inclusion. Thus $d$ is a module deviation. The calculation with $X=\one$ shows that it is the only one.

When $d=0$, the first part shows that \eqref{eq:module-deviation} is equivalent to \eqref{eq:compatibility}. Also, $X[\chi(\deg X)]\odot M[p(M)]$ lies in $\Hh_p$ exactly when each simple summand occurs with the shift prescribed by $p$. This is again \eqref{eq:compatibility}.
\end{proof}

\subsection{Module components and stabilizers}\label{subsec:module-components}

\begin{definition}\label{def:module-components}
The graph associated with $\M$ has vertex set $\Irr(\M)$. Two vertices $M$ and $P$ are joined when $P$ is a direct summand of $X\odot M$ for some $X\in\Irr(\C)$. A connected component of this graph is called a \emph{component of $\M$}.
\end{definition}

The adjacency condition is symmetric. Indeed, suppose that $P$ is a direct summand of $X\odot M$. By rigidity, a split inclusion $P\to X\odot M$ gives a nonzero map $X^*\odot P\to M$. Since $M$ is simple, this map is an epimorphism. Semisimplicity then implies that it splits. Hence $M$ is a direct summand of $X^*\odot P$.

For a component $\Omega$, let $\M_\Omega$ be the full additive subcategory generated by its vertices. Every simple constituent of $X\odot M$ lies in the same component as $M$. Hence each $\M_\Omega$ is stable under the $\C$-action, and
\[
\M=\bigoplus_\Omega\M_\Omega,
\]
where each object has only finitely many nonzero components. These are precisely the indecomposable semisimple module summands of $\M$. Any decomposition of $\M$ into stable semisimple subcategories gives a partition of the vertices of the graph. Since each subcategory is stable under the $\C$-action, every edge lies within one part of the partition. Thus every part is a union of connected components. Conversely, any union of connected components generates a stable subcategory. Therefore the subcategories $\M_\Omega$ are precisely the indecomposable semisimple module summands.

Fix a component $\Omega$ and $M_0\in\Omega$. For $g\in U_{\C}$, let $\C_g$ denote the homogeneous component of the universal grading.

\begin{definition}\label{def:component-stabilizer}
Define $H_\Omega\subseteq U_{\C}$ by
\begin{equation}\label{eq:stabilizer}
H_\Omega=\{g\in U_{\C}:M_0\text{ is a direct summand of }X_g\odot M_0\text{ for some }X_g\in\C_g\}.
\end{equation}
\end{definition}

\begin{proposition}\label{prop:coset-grading}
The subset $H_\Omega$ is a subgroup of $U_{\C}$. Every $P\in\Omega$ is a direct summand of $X_g\odot M_0$ for some $g\in U_{\C}$ and some $X_g\in\C_g$. If $P$ is also a direct summand of $Y_h\odot M_0$ for some $Y_h\in\C_h$, then $gH_\Omega=hH_\Omega$.
\end{proposition}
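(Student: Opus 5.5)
The argument runs through a "transport along the component" lemma. \emph{If $P\le X\odot M$ and $M\le Y\odot M_0$ with $X\in\C_a$ and $Y\in\C_b$, then $P$ is a direct summand of $Z\odot M_0$ for some simple summand $Z$ of $X\otimes Y$.} Every such $Z$ is homogeneous of degree $ab$.

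To prove this, note that $X\odot M$ is a direct summand of $X\odot(Y\odot M_0)$, by additivity of the action and semisimplicity. The module associator identifies $X\odot(Y\odot M_0)$ with $(X\otimes Y)\odot M_0$. Decompose $X\otimes Y=\bigoplus Z_i$ into simples; each $Z_i$ has degree $ab$ by the defining property of $\deg$. Then $P$ is a simple summand of $\bigoplus Z_i\odot M_0$, so by Krull--Schmidt in the semisimple finite-length category $\M$ it is a summand of some $Z_i\odot M_0$. The statement is then proved in three steps.

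\textbf{Subgroup.} Take $X=Y=\one$. Then $M_0\le\one\odot M_0$, so $e\in H_\Omega$. For closure under products, let $g,h\in H_\Omega$ with $M_0\le X_g\odot M_0$ and $M_0\le X_h\odot M_0$. Applying the lemma with $P=M=M_0$ gives $gh\in H_\Omega$. For inverses, use the symmetry argument given after Definition~\ref{def:module-components}. From $M_0\le X_g\odot M_0$ we get $M_0\le X_g^*\odot M_0$. The simple object $X_g^*$ has degree $g^{-1}$, since $\one\le X_g^*\otimes X_g$ forces $\deg X_g^*\cdot g=e$. Hence $g^{-1}\in H_\Omega$.

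\textbf{Every $P\in\Omega$ is reached.} Choose a path $M_0=N_0,N_1,\dots,N_r=P$ in the graph, with $N_{i+1}\le X_i\odot N_i$. Induct on $r$ using the lemma. The base case is $M_0\le\one\odot M_0$ with $g=e$. This gives $P\le X_g\odot M_0$ with $X_g$ simple and homogeneous. A non-simple $X_g\in\C_g$ would also suffice, by decomposing it into simple summands.

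\textbf{Well-definedness of the coset.} Suppose $P\le X_g\odot M_0$ and $P\le Y_h\odot M_0$. I may assume $X_g$ and $Y_h$ are simple, since $P$ is a summand of $S\odot M_0$ for some simple summand $S$ of $X_g$, and similarly for $Y_h$. By the symmetry argument, $M_0\le X_g^*\odot P$. Applying the lemma with $M=P$ and the pair $(X_g^*,Y_h)$ shows that $M_0$ is a summand of $Z\odot M_0$ with $\deg Z=g^{-1}h$. Hence $g^{-1}h\in H_\Omega$, and therefore $gH_\Omega=hH_\Omega$.

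The main obstacle is the degree of duals. I justify it through the grading relation applied to the summand $\one\le X^*\otimes X$, which holds because evaluation is a nonzero map to a simple object in a semisimple category.
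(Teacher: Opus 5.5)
Your proof is correct and follows essentially the paper's route. Products and the path argument both come from composing split inclusions through the module associator, and inverses and the comparison $g^{-1}h\in H_\Omega$ come from the rigidity adjunction $M_0\le X^*\odot P$. Your changes are harmless refinements: you package the composition step as a lemma, you reduce to simple acting objects (do this in the inverse step too, not only in the last step), and you justify $\deg X^*=(\deg X)^{-1}$ via $\one\le X^*\otimes X$, which the paper only asserts.
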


\begin{proof}
The identity belongs to $H_\Omega$ by the unit constraint. Now let $g,h\in H_\Omega$. Choose $X_g\in\C_g$ and $Y_h\in\C_h$ such that $M_0$ is a direct summand of both $X_g\odot M_0$ and $Y_h\odot M_0$. Composing the corresponding split inclusions gives
$$
M_0\rightarrow X_g\odot M_0\rightarrow X_g\odot(Y_h\odot M_0)\cong (X_g\otimes Y_h)\odot M_0.
$$
Hence $gh\in H_\Omega$. Next let $g\in H_\Omega$. Choose a split inclusion $M_0\to X_g\odot M_0$. By rigidity, this gives a nonzero map $X_g^*\odot M_0\rightarrow M_0$. Since $M_0$ is simple, this map is an epimorphism. Semisimplicity implies that it splits. Since $X_g^*\in\C_{g^{-1}}$, we obtain $g^{-1}\in H_\Omega$. Thus $H_\Omega$ is a subgroup of $U_{\C}$.

Now let $P\in\Omega$. Choose a path from $M_0$ to $P$. Using the symmetry of adjacency, each step can be written in the direction of the path by replacing the acting object with its dual when necessary. Composing the split inclusions along the path shows that $P$ is a direct summand of $X_g\odot M_0$ for some homogeneous object $X_g\in\C_g$.

Suppose that $P$ is also a direct summand of $Y_h\odot M_0$ for some $Y_h\in\C_h$. A split inclusion $P\to X_g\odot M_0$ gives, by rigidity, a nonzero map $X_g^*\odot P\rightarrow M_0$. Since $M_0$ is simple, this map is a split epimorphism. Choose a section $M_0\rightarrow X_g^*\odot P$. Composing it with the map induced by a split inclusion $P\to Y_h\odot M_0$ gives a nonzero map
$$
M_0\rightarrow (X_g^*\otimes Y_h)\odot M_0.
$$
This map splits because $\M$ is semisimple. Since $X_g^*\otimes Y_h\in\C_{g^{-1}h}$, it follows that $g^{-1}h\in H_\Omega$. Therefore $gH_\Omega=hH_\Omega$.
\end{proof}

We call $H_\Omega$ the stabilizer subgroup associated with $M_0$. For $P\in\Omega$, define $\deg_\Omega(P)=gH_\Omega$ when $P$ is a direct summand of $X_g\odot M_0$. This is well defined by Proposition~\ref{prop:coset-grading}. For each $gH_\Omega\in U_{\C}/H_\Omega$, let $\M_{gH_\Omega}$ be the full additive subcategory generated by the simple objects of degree $gH_\Omega$. Then
\[
\M_\Omega=\bigoplus_{gH_\Omega\in U_{\C}/H_\Omega}\M_{gH_\Omega},
\]
where each object has only finitely many nonzero homogeneous components. This grading is compatible with the $\C$-action. Indeed, if $X\in\C_x$ and $M\in\M_{gH_\Omega}$, then every simple summand of $X\odot M$ has degree $xgH_\Omega$. Hence
\begin{equation}\label{eq:coset-action}
\C_x\odot\M_{gH_\Omega}\subseteq\M_{xgH_\Omega}.
\end{equation}

If we choose another base simple object of degree $gH_\Omega$, the same argument gives the stabilizer $gH_\Omega g^{-1}$. Since $\mathbb Z$ is abelian, every homomorphism $\chi: U_{\C}\to\mathbb Z$ is invariant under conjugation. Therefore the condition in the next theorem does not depend on the choice of $M_0$.

\begin{theorem}\label{thm:stabilizer}
Fix $\chi: U_{\C}\to\mathbb Z$. On a component $\Omega$, a function $p$ satisfying \eqref{eq:compatibility} exists if and only if
\begin{equation}\label{eq:stabilizer-condition}
\chi(H_\Omega)=0.
\end{equation}
When this condition holds, all such functions have the form
\begin{equation}\label{eq:component-functions}
p(P)=c_\Omega+\chi(g),\quad P\in\M_{gH_\Omega},
\end{equation}
where $c_\Omega\in\mathbb Z$ is arbitrary. The constants on distinct components are independent.
\end{theorem}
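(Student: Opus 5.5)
Necessity comes first. Suppose $p$ satisfies \eqref{eq:compatibility} on $\Omega$. Take $g\in H_\Omega$, so that $M_0$ is a direct summand of $X_g\odot M_0$ for some $X_g\in\C_g$. Since $X_g$ has finite length, $X_g$ is a finite direct sum of simples in $\C_g$, all of degree $g$. Hence $M_0$ is a direct summand of $X\odot M_0$ for some simple $X$ of degree $g$. Applying \eqref{eq:compatibility} to the triple $(X,M_0,M_0)$ gives $p(M_0)=p(M_0)+\chi(g)$, so $\chi(g)=0$. This proves \eqref{eq:stabilizer-condition}.

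The same reduction to simple summands shows that any solution has the form \eqref{eq:component-functions}. Let $P\in\Omega$ with $\deg_\Omega(P)=gH_\Omega$. By Proposition~\ref{prop:coset-grading}, $P$ is a direct summand of $X\odot M_0$ for some simple $X$ of degree $g$. Then \eqref{eq:compatibility} gives $p(P)=p(M_0)+\chi(g)$. Setting $c_\Omega=p(M_0)$ yields \eqref{eq:component-functions}. Under \eqref{eq:stabilizer-condition}, $\chi(g)$ depends only on the coset $gH_\Omega$, so the formula is consistent.

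For sufficiency, assume $\chi(H_\Omega)=0$ and fix any $c_\Omega\in\mathbb Z$. Define $p(P)=c_\Omega+\chi(g)$ for $P\in\M_{gH_\Omega}$; this is well defined because $\chi$ kills $H_\Omega$ and $\deg_\Omega$ is well defined. To verify \eqref{eq:compatibility}, let $X\in\Irr(\C)$ with $\deg X=x$, let $M\in\M_{gH_\Omega}$, and let $P$ be a simple summand of $X\odot M$. By \eqref{eq:coset-action}, $P\in\M_{xgH_\Omega}$. Therefore
\[
p(P)=c_\Omega+\chi(xg)=c_\Omega+\chi(g)+\chi(x)=p(M)+\chi(\deg X),
\]
using that $\chi$ is a homomorphism. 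If $M\in\Omega$, then every summand $P$ also lies in $\Omega$, because components are stable under the action.

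Independence across components follows from the same stability. Relation \eqref{eq:compatibility} only involves triples $(X,M,P)$ with $M$ and $P$ in the same component. So the conditions on distinct components are decoupled, and the constants $c_\Omega$ may be chosen separately. No step seems hard. The only point needing care is the reduction of the non-simple homogeneous $X_g$ in Definition~\ref{def:component-stabilizer} and Proposition~\ref{prop:coset-grading} to a simple summand of the same degree, which uses that each $\C_g$ is a semisimple subcategory closed under summands.
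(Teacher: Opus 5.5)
Your proposal is correct and follows the paper's proof essentially step for step. You prove necessity by passing from the homogeneous $X_h$ to a simple constituent of the same degree and applying \eqref{eq:compatibility} to $M_0$, you obtain the normal form with $c_\Omega=p(M_0)$, and you prove sufficiency through the coset-action inclusion \eqref{eq:coset-action}; your explicit remarks that $\chi(g)$ does not depend on the coset representative and that the conditions on distinct components decouple fill in points the paper leaves implicit.
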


\begin{proof}
Let $h\in H_\Omega$. By definition, there exists $X_h\in\C_h$ such that $M_0$ is a direct summand of $X_h\odot M_0$. Decompose $X_h$ into simple objects. Since $M_0$ is simple, it is a direct summand of $Y_h\odot M_0$ for some simple constituent $Y_h$ of $X_h$. Since $X_h\in\C_h$, we also have $Y_h\in\C_h$. Applying \eqref{eq:compatibility} gives $p(M_0)=p(M_0)+\chi(h)$. Thus $\chi(h)=0$. Since $h$ was arbitrary, \eqref{eq:stabilizer-condition} is necessary.

Now let $P$ have degree $gH_\Omega$. By definition of the degree, there exists $X_g\in\C_g$ such that $P$ is a direct summand of $X_g\odot M_0$. As above, we may choose a simple constituent $Y_g\in\C_g$ such that $P$ is a direct summand of $Y_g\odot M_0$. Applying \eqref{eq:compatibility} gives $p(P)=p(M_0)+\chi(g)$. Hence $p$ has the form \eqref{eq:component-functions}, with $c_\Omega=p(M_0)$.

Conversely, suppose that \eqref{eq:stabilizer-condition} holds and define $p$ by \eqref{eq:component-functions}. This is well defined. Indeed, if $gH_\Omega=g'H_\Omega$, then $g^{-1}g'\in H_\Omega$, so $\chi(g')-\chi(g)=\chi(g^{-1}g')=0$. Now let $P\in\M_{gH_\Omega}$ and let $X\in\C_x$ be simple. If $Q$ is a simple direct summand of $X\odot P$, then \eqref{eq:coset-action} gives $Q\in\M_{xgH_\Omega}$. Therefore
\[
p(Q)=c_\Omega+\chi(xg)=p(P)+\chi(x),
\]
which is \eqref{eq:compatibility}. Finally, the constant $c_\Omega$ can be chosen independently on each component.
\end{proof}

For the standard $t$-structure on $D^b(\C)$, we have $\chi=0$. Hence Theorem~\ref{thm:stabilizer} gives one arbitrary integer shift on each component. If there are infinitely many components, these shifts may be unbounded. Nevertheless, the resulting $t$-structure is bounded because every object of $D^b(\M)$ is a finite direct sum of shifts of simple objects. The $U_{\C}/H_\Omega$-grading above is related to the group-set gradings studied in \cite{Galindo,GalindoGraded,MeirMusicantov}.

\section{Regrading and reconstruction}\label{sec:reconstruction}

We first give the shift isomorphism for tensor products and module actions. Let $\star$ denote either operation. We use the total-complex differential from \eqref{eq:total-action-differential}.

\begin{lemma}\label{lem:shift-isomorphism}
Let $K\in C^b(\C)$ and let $L$ be a bounded complex in $\C$ or $\M$. For integers $a,b$, there is a natural chain isomorphism
\[
\sigma_{a,b}: K[a]\star L[b]\rightarrow(K\star L)[a+b]
\]
defined by
\begin{equation}\label{eq:shift-isomorphism}
\sigma_{a,b}(x\star y)=(-1)^{bi}x\star y,\quad x\in K[a]^i.
\end{equation}
For three factors, these isomorphisms are compatible with associativity.
\end{lemma}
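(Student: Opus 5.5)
The plan is a direct sign computation. First I check that $\sigma_{a,b}$ is an isomorphism of graded objects, then that it commutes with the differentials, then naturality, and finally compatibility with associativity.

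\emph{Graded isomorphism.} By the convention $(K[a])^i=K^{i+a}$, the degree-$r$ part of $K[a]\star L[b]$ is
\[
\bigoplus_{i+j=r}K^{i+a}\star L^{j+b},
\]
which is exactly $((K\star L)[a+b])^r$. So $\sigma_{a,b}$, being $\pm1$ on each summand, is an isomorphism of graded objects.

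\emph{Chain map.} Let $x\in K[a]^i=K^{i+a}$ and $y\in L[b]^j$.
\begin{itemize}
\item In the source, $d_{K[a]}=(-1)^a d_K$ and $d_{L[b]}=(-1)^b d_L$. By \eqref{eq:total-action-differential},
\[
d(x\star y)=(-1)^a d_Kx\star y+(-1)^{i+b}x\star d_Ly.
\]
\item In the target, $x$ has degree $i+a$ in $K$. The shift by $a+b$ multiplies the total differential by $(-1)^{a+b}$, so
\[
d(x\star y)=(-1)^{a+b}\bigl(d_Kx\star y+(-1)^{i+a}x\star d_Ly\bigr).
\]
\item Now compare $\sigma\circ d$ with $d\circ\sigma$ term by term. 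The first term of $\sigma d$ has $d_Kx$ in $K[a]$-degree $i+1$, so it picks up the sign $(-1)^{b(i+1)}$. The first term of $d\sigma$ has sign $(-1)^{bi}(-1)^{a+b}$. These agree after cancelling $(-1)^{bi+b}$ against the common factor $(-1)^a$.
\item The second terms agree similarly: $(-1)^{bi}(-1)^{i+b}$ versus $(-1)^{bi}(-1)^{a+b}(-1)^{i+a}$.
\end{itemize}
Naturality in $K$ and $L$ is immediate, since $\sigma$ is a degreewise sign and chain maps preserve degrees.

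\emph{Associativity.} This is the step I expect to need the most care. Write the three-factor statement as commutativity of the square formed by
\[
\sigma_{a+b,c}\circ(\sigma_{a,b}\star\id)\quad\text{and}\quad \sigma_{a,b+c}\circ(\id\star\sigma_{b,c}),
\]
both running from $(K[a]\star K'[b])\star L[c]$ to $(K\star K'\star L)[a+b+c]$, together with the associator. By Lemma~\ref{lem:derived-action}, the associator acts degreewise with no sign, so it only remains to compare the two composite signs. For $x\in K[a]^i$, $x'\in K'[b]^j$ and $y\in L[c]$:
\begin{itemize}
\item The first path gives $(-1)^{bi}$ from $\sigma_{a,b}$, then $(-1)^{c(i+j)}$ from $\sigma_{a+b,c}$, because the first factor has degree $i+j$ in $K[a]\star K'[b]$.
\item The second path gives $(-1)^{cj}$ from $\sigma_{b,c}$, then $(-1)^{(b+c)i}$ from $\sigma_{a,b+c}$.
\item Both total exponents equal $bi+ci+cj$.
\end{itemize}
One also has to check the degree bookkeeping: $\sigma_{a+b,c}$ is applied to $(K\star K')[a+b]\star L[c]$, where the relevant degree is that of the first factor. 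The identity must hold uniformly whether $\star$ is the tensor product or the module action, and this is automatic because the formulas are identical.
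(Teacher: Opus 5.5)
Your proposal is correct and follows the paper's proof essentially verbatim. It uses the same term-by-term sign comparison of $\sigma\circ d$ with $d\circ\sigma$ (both terms giving $(-1)^{bi+a+b}$ and $(-1)^{bi+i+b}$), the same associativity exponents $bi+c(i+j)$ and $cj+(b+c)i$, and the fact from Lemma~\ref{lem:derived-action} that the associator contributes no sign; your explicit check that the underlying graded objects coincide is a small addition the paper leaves implicit.
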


\begin{proof}
The differential on the source is
\[
d(x\star y)=(-1)^a d_Kx\star y+(-1)^{i+b}x\star d_Ly.
\]
After applying \eqref{eq:shift-isomorphism}, the coefficients of the two terms are $(-1)^{a+b(i+1)}=(-1)^{bi+a+b}$ and $(-1)^{bi+i+b}$. On the other hand, applying $\sigma_{a,b}$ first and then the differential on the target gives the same coefficients. Hence $\sigma_{a,b}$ is a chain isomorphism. It is natural in $K$ and $L$, so it induces a natural isomorphism on the bounded derived categories.

It remains to check compatibility with associativity. Let $x\in K[a]^i$ and $y\in L[b]^j$, and let the third factor be shifted by $c$. The two composites in the associativity diagram have signs
\[
(-1)^{bi+c(i+j)}\quad\text{and}\quad(-1)^{cj+(b+c)i}.
\]
The two signs are equal. After cancelling the common scalar, the diagram reduces to the original associativity diagram. Since the total-complex associator acts degreewise, it contributes no additional sign.
\end{proof}

\subsection{Tensor regrading and reconstruction}\label{subsec:tensor-reconstruction}

Fix $\chi: U_{\C}\to\mathbb Z$ and write
\[
\C=\bigoplus_{n\in\mathbb Z}\C_n,
\]
where $\C_n$ is generated by the simple objects $X$ with $\chi(\deg X)=n$. There are no morphisms between distinct homogeneous components. Hence every object has a functorial decomposition
\[
X=\bigoplus_n X_n,\quad X_n\in\C_n.
\]
Since every object has finite length, only finitely many $X_n$ are nonzero. The corresponding projection functors $X\mapsto X_n$ are exact.

The normalized aisle and coaisle associated with $\chi$ are closed under tensor product. Thus $\Hh_\chi$ is a tensor subcategory of $D^b(\C)$ with the inherited tensor product.

\begin{theorem}\label{thm:tensor-regrading}
The functor
\[
R_\chi:\C\rightarrow\Hh_\chi,\quad R_\chi(X)=\bigoplus_n X_n[n],
\]
admits a strong monoidal structure and is an exact tensor equivalence.
\end{theorem}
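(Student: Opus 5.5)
The plan is to construct $R_\chi$ explicitly as a $k$-linear functor, check that it is an additive equivalence onto $\Hh_\chi$, and then equip it with a monoidal structure built from the shift isomorphisms $\sigma_{a,b}$ of Lemma~\ref{lem:shift-isomorphism}. Exactness will be automatic at the end.

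\emph{Functoriality and equivalence.} The decomposition $X=\bigoplus_n X_n$ is functorial, since there are no morphisms between distinct homogeneous components. Hence a morphism $f\colon X\to Y$ is the finite sum $\bigoplus_n f_n$, and I set $R_\chi(f)=\bigoplus_n f_n[n]$. Each simple $X\in\C_n$ satisfies $\chi(\deg X)=n$, so $X[n]$ lies in $\Hh_\chi$ by Corollary~\ref{cor:normalized-classification}. Therefore $R_\chi$ lands in $\Hh_\chi$.

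For full faithfulness I use Lemma~\ref{lem:split-derived}. It gives $\Hom_{D^b(\C)}(X_m[m],Y_n[n])=0$ for $m\neq n$ and $\Hom_{D^b(\C)}(X_n[n],Y_n[n])\cong\Hom_\C(X_n,Y_n)$. This matches $\Hom_\C(X,Y)=\bigoplus_n\Hom_\C(X_n,Y_n)$.

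Essential surjectivity follows because $\Hh_\chi=\add\{X[\chi(\deg X)]\}$, and each such generator is $R_\chi(X)$. Both $\C$ and $\Hh_\chi$ are abelian, and every additive equivalence of abelian categories preserves kernels and cokernels, so $R_\chi$ is exact.

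\emph{Monoidal structure.} First I need the grading to be multiplicative. The map $X\mapsto\chi(\deg X)$ is additive on simple summands of tensor products, so $\C_m\otimes\C_n\subseteq\C_{m+n}$. Consequently $(X\otimes Y)_r=\bigoplus_{m+n=r}X_m\otimes Y_n$ canonically.

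I then define
\[
J_{X,Y}\colon R_\chi(X)\otimes R_\chi(Y)=\bigoplus_{m,n}X_m[m]\otimes Y_n[n]\xrightarrow{\ \bigoplus\sigma_{m,n}\ }\bigoplus_{m,n}(X_m\otimes Y_n)[m+n]\cong R_\chi(X\otimes Y).
\]
This is natural because $\sigma$ is natural and the decomposition is functorial.

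The unit isomorphism $\one\to R_\chi(\one)=\one[0]$ is the identity, since $\one\in\C_0$. The unit axioms hold because $\sigma_{0,b}$ and $\sigma_{a,0}$ carry trivial signs: $(-1)^{0}=1$.

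For the associativity hexagon, I restrict to the homogeneous summand $X_a[a]\otimes Y_b[b]\otimes Z_c[c]$. There the two composites are built from $\sigma$'s and the degreewise associator. Lemma~\ref{lem:shift-isomorphism} asserts precisely that these composites agree with the original associator shifted by $a+b+c$, with equal signs $(-1)^{bi+c(i+j)}=(-1)^{cj+(b+c)i}$.

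\emph{Main obstacle.} The main point needing care is this sign bookkeeping in the hexagon, together with checking that $J$ is an isomorphism in $D^b(\C)$ and not merely on chain level. Both are handled by the lemma, since $\sigma$ is a natural chain isomorphism. The remaining steps are formal consequences of semisimplicity.
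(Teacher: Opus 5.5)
Your proposal is correct and follows essentially the same route as the paper: full faithfulness and essential surjectivity come from semisimplicity (Lemma~\ref{lem:split-derived}) and the description of $\Hh_\chi$. The tensorator is built summandwise from $\sigma_{m,n}$ composed with the inclusion into $(X\otimes Y)_{m+n}[m+n]$, the unit map is the identity since $\one\in\C_0$, and the associativity diagram reduces to the sign identity of Lemma~\ref{lem:shift-isomorphism}. The only cosmetic difference is that you deduce exactness from the fact that an equivalence of abelian categories preserves kernels and cokernels, whereas the paper appeals to semisimplicity of both categories.
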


\begin{proof}
Semisimplicity gives
\[
\Hom_{\Hh_\chi}(R_\chi(X),R_\chi(Y))\cong\bigoplus_n\Hom_{\C}(X_n,Y_n)\cong\Hom_{\C}(X,Y).
\]
Every object of $\Hh_\chi$ is a finite direct sum of simple objects with their prescribed shifts. Hence $R_\chi$ is an equivalence. It is exact because both categories are semisimple.

We next define the monoidal structure on $R_\chi$. Let the tensorator
\[
\mu_{X,Y}: R_\chi(X)\otimes R_\chi(Y)\rightarrow R_\chi(X\otimes Y)
\]
be defined on the summand $X_n[n]\otimes Y_m[m]$ by composing $\sigma_{n,m}$ from Lemma~\ref{lem:shift-isomorphism} with the canonical inclusion
\[
(X_n\otimes Y_m)[n+m]\rightarrow(X\otimes Y)_{n+m}[n+m].
\]
Since tensor product distributes over the finite homogeneous decompositions, the sum of these maps is an isomorphism. Naturality follows from Lemma~\ref{lem:shift-isomorphism} and the functoriality of the homogeneous projections.

It remains to check coherence. The stalk complex $X_n[n]$ is concentrated in degree $-n$. Thus the scalar in \eqref{eq:shift-isomorphism} is $(-1)^{nm}$. For three homogeneous objects of degrees $n,m,\ell$, the signs along the two sides of the monoidal associativity diagram are
\[
(-1)^{nm+(n+m)\ell} \quad\text{and}\quad (-1)^{m\ell+n(m+\ell)}.
\]
These signs agree. After cancelling the common scalar, the diagram reduces to the associativity diagram in $\C$.

Finally, since $\one\in\C_0$, we take the identity $\one\to R_\chi(\one)$ as the unit map. The corresponding signs are all $1$. Hence the unit diagrams reduce to those in $\C$. Therefore $R_\chi$ is strong monoidal.
\end{proof}

\begin{proposition}\label{prop:derived-regrading}
The functor
\[
T_\chi: D^b(\C)\rightarrow D^b(\C),\quad T_\chi(K)=\bigoplus_n K_n[n],
\]
is a strong monoidal triangulated autoequivalence. Its restriction to the standard heart is $R_\chi$. In particular, it sends the standard heart onto $\Hh_\chi$.
\end{proposition}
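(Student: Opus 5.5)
We define $T_\chi$ at the chain level and check its properties against the results already established. First, for a bounded complex $K\in C^b(\C)$ the homogeneous projections $K\mapsto K_n$ are exact, additive functors commuting with differentials. Hence $K=\bigoplus_n K_n$ as complexes, and only finitely many $K_n$ are nonzero, because each of the finitely many terms $K^i$ has finite length. We then set $T_\chi(K)=\bigoplus_n K_n[n]$ on $C^b(\C)$. This functor is additive and commutes with the shift, using the identity $(K[a])_n[n]=(K_n[n])[a]$; no sign is needed, since both shifts act on the same complex. It also sends mapping cones to mapping cones, because cones are formed componentwise and the projections commute with them. It preserves quasi-isomorphisms, or equivalently acyclic (hence contractible) complexes, since a contracting homotopy restricts to each $K_n$. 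Therefore $T_\chi$ descends to a triangulated functor on $D^b(\C)$. The functor $T_{-\chi}$ is a quasi-inverse, via the canonical identification $(K_n[n])[-n]=K_n$. So $T_\chi$ is a triangulated autoequivalence.

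For the monoidal structure we copy the proof of Theorem~\ref{thm:tensor-regrading}. On the summand $K_n[n]\otimes L_m[m]$, the tensorator is $\sigma_{n,m}$ from Lemma~\ref{lem:shift-isomorphism} followed by the inclusion $(K_n\otimes L_m)[n+m]\to (K\otimes L)_{n+m}[n+m]$. This inclusion is well defined because $\C_n\otimes\C_m\subseteq\C_{n+m}$, which holds since $\chi$ is a homomorphism and $\deg$ is multiplicative on simple summands. Summing over $n+m=r$ gives an isomorphism, because the total complex distributes over the finite decompositions. Naturality follows from the naturality of $\sigma$. For associativity we use the three-factor compatibility in Lemma~\ref{lem:shift-isomorphism}, which gives the equal signs $(-1)^{bi+c(i+j)}=(-1)^{cj+(b+c)i}$. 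The unit map is the identity, since $\one\in\C_0$.

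We also have to check that the tensorator is compatible with the triangulated structure, that is, with the shift isomorphisms of $\otimes$ in each variable; this is part of what ``monoidal triangulated'' requires. This step is the main obstacle, because of the Koszul signs. We handle it again with Lemma~\ref{lem:shift-isomorphism}. The extra shift $[a]$ and the regrading shifts $[n]$ combine through $\sigma_{a+n,\,b+m}$ versus $\sigma_{a,b}\circ\sigma_{n,m}$. We expect the same parity computation to show that the resulting squares commute: both composites carry the sign $(-1)^{(b+m)i}$ up to terms that agree.

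Finally, a stalk object $X\in\C$ in degree $0$ has homogeneous components $X_n$, so $T_\chi(X)=\bigoplus_n X_n[n]=R_\chi(X)$. On stalks the tensorator above agrees with the one in Theorem~\ref{thm:tensor-regrading}, so the restriction of $T_\chi$ to the standard heart is $R_\chi$ as a monoidal functor. Since $R_\chi$ is essentially surjective onto $\Hh_\chi$ by Theorem~\ref{thm:tensor-regrading}, the functor $T_\chi$ sends the standard heart onto $\Hh_\chi$.
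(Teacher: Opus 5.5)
Your outline follows the paper's proof: you decompose complexes into homogeneous pieces, let $T_\chi$ act as $[n]$ on $D^b(\C_n)$, build the tensorator from $\sigma_{n,m}$, take the identity as unit, and restrict to the heart. The monoidal coherence and heart statements are fine. The problem is the triangulated structure you put on $T_\chi$. You assert that $T_\chi$ commutes with $[a]$ with no sign and that it sends cones to cones. With the paper's conventions, where $f[n]$ acts termwise, this is not literally true. The off-diagonal component of the differential of $C(f)[n]$ is $(-1)^n f$, while that of $C(f[n])$ is $f$. The comparison isomorphism $\operatorname{diag}(1,(-1)^n)$ therefore multiplies the connecting morphism by $(-1)^n$. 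This is why the standard exact structure on $[n]$ uses the sign $(-1)^n$ for $[n][1]\cong[1][n]$. Exactness happens to survive your choice here. In $D^b$ of a semisimple category every distinguished triangle is a direct sum of rotations of $X\xrightarrow{\id}X\to 0\to X[1]$, so anti-distinguished triangles are distinguished. But the reason you give does not establish this.

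The step that actually fails is the one you flagged and left as an expectation. Let $K\in C^b(\C_n)$ and $L\in C^b(\C_m)$. Write the commutation isomorphism $T_\chi(K[a])\to T_\chi(K)[a]$ as $\epsilon(a,n)$ times the identity, and let $x$ lie in degree $i$ of $K[a][n]$. The composite through $\mu_{K[a],L[b]}$ and $T_\chi(\sigma_{a,b})$ carries the sign $(-1)^{mi+b(i+n)}\epsilon(a+b,n+m)$. The composite through $\xi\otimes\xi$, then $\sigma_{a,b}$, then $\mu_{K,L}[a+b]$ carries $(-1)^{bi+m(i+a)}\epsilon(a,n)\epsilon(b,m)$. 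With your choice $\epsilon\equiv 1$ these differ by $(-1)^{am+bn}$. Already for $a=1$ and $b=0$ the square fails by $(-1)^m$, so your expectation that the terms agree is false for the data you fixed. The fix is to take $\epsilon(a,n)=(-1)^{an}$, that is, the standard exact structure of the shift $[n]$ on each $D^b(\C_n)$. This is what the paper's sentence that $T_\chi$ ``is the shift $[n]$'' tacitly uses. Then $\epsilon(a+b,n+m)\,\epsilon(a,n)\,\epsilon(b,m)=(-1)^{am+bn}$ cancels the discrepancy exactly, and both shift squares commute. The paper does not carry out this compatibility check. If your definition of a monoidal triangulated functor includes it, this sign choice is what makes it hold.
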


\begin{proof}
Apply the exact homogeneous projections termwise to a bounded complex. This gives a functorial decomposition $K=\bigoplus_n K_n$ into homogeneous subcomplexes. On $D^b(\C_n)$, the functor $T_\chi$ is the shift $[n]$. Hence $T_\chi$ is a triangulated autoequivalence, with inverse given by $[-n]$ on each $D^b(\C_n)$.

The tensorator is defined on homogeneous summands by the isomorphisms $\sigma_{n,m}$ from Lemma~\ref{lem:shift-isomorphism}. Their compatibility with associativity gives the monoidal coherence. Since $\one\in\C_0$, the unit constraint is the identity. Thus $T_\chi$ is strong monoidal.

Finally, if $X\in\C$ is regarded as an object of the standard heart, then
\[
T_\chi(X)=\bigoplus_n X_n[n]=R_\chi(X).
\]
Hence the restriction of $T_\chi$ to the standard heart is $R_\chi$, and its image is $\Hh_\chi$.
\end{proof}

\begin{theorem}\label{thm:tensor-reconstruction}
Let $\C$ and $\D$ be essentially small locally finite semisimple tensor categories. If $F: D^b(\C)\to D^b(\D)$ is a monoidal triangulated equivalence, then there is an exact $k$-linear tensor equivalence $\C\simeq_\otimes\D$.
\end{theorem}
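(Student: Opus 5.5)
Transport the standard $t$-structure of $D^b(\C)$ along $F$ and identify the result with one of the structures classified in Theorem~\ref{thm:monoidal-classification}. Let $t_0$ be the standard $t$-structure on $D^b(\C)$. It is bounded and normalized monoidal, with shift function $p=0$, corresponding to $(\chi,d)=(0,0)$. Define $t'=F(t_0)$ on $D^b(\D)$ by $D_{t'}^{\leq0}=F(D_{t_0}^{\leq0})$ (closed under isomorphism) and similarly for the coaisle.

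\emph{Step 1: $t'$ is a bounded normalized monoidal $t$-structure.} Since $F$ is a triangulated equivalence, $t'$ is a bounded $t$-structure, and $F$ commutes with shifts up to isomorphism, so $D_{t'}^{\leq n}=F(D_{t_0}^{\leq n})$. Because $F$ is strong monoidal, $F(A)\otimes F(B)\cong F(A\otimes B)$. The inclusions \eqref{eq:monoidal-t} with $d=0$ therefore transfer from $t_0$ to $t'$, and $t'$ is normalized monoidal. Its heart $F(\C)$ is a tensor subcategory of $D^b(\D)$. Moreover, $F$ restricts to a $k$-linear monoidal equivalence $\C\to F(\C)=\Hh_{t'}$, exact because both sides are semisimple.

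\emph{Step 2: identify the heart.} By Corollary~\ref{cor:normalized-classification}, there is a unique $\chi_F\in\Hom(U_{\D},\mathbb Z)$ with $t'=t_{\chi_F}$ and $\Hh_{t'}=\Hh_{\chi_F}$. Theorem~\ref{thm:tensor-regrading} gives an exact tensor equivalence $R_{\chi_F}:\D\to\Hh_{\chi_F}$. The composite
\[
f=R_{\chi_F}^{-1}\circ F|_{\C}:\C\to\D
\]
is then an exact $k$-linear tensor equivalence, since a quasi-inverse of a strong monoidal equivalence is again strong monoidal. Equivalently, $T_{-\chi_F}\circ F$ is $t$-exact by Proposition~\ref{prop:derived-regrading}, and its restriction to the standard hearts is $f$.

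\emph{Main obstacle.} The delicate point is checking that the monoidal structure on $\Hh_{t'}$ induced from $D^b(\D)$ agrees with the one $\Hh_{\chi_F}$ carries in Theorem~\ref{thm:tensor-regrading}. Both are restrictions of the tensor product of $D^b(\D)$, with the same associator and unit, so they do agree. Equally, $F|_{\C}$ must be monoidal with respect to the heart structure on $\C\subset D^b(\C)$; this is the original structure on $\C$, since the standard heart is embedded as stalk complexes in degree $0$ with trivial signs. Everything else follows from the preceding results.
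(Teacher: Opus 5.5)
Your proposal is correct and matches the paper's proof. You transport the standard $t$-structure along $F$, use Corollary~\ref{cor:normalized-classification} to identify the transported heart with $\Hh_{\chi_F}$, and compose $F|_{\C}$ with a tensor quasi-inverse of $R_{\chi_F}$ from Theorem~\ref{thm:tensor-regrading}. Your remark that $T_{-\chi_F}\circ F$ restricts to $f$ is not needed, and it holds only up to monoidal natural isomorphism: the $\sigma$ sign conventions leave a coboundary twist $(-1)^{nm}$.
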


\begin{proof}
Transport the standard $t$-structure on $D^b(\C)$ along $F$. The resulting $t$-structure on $D^b(\D)$ is bounded and normalized monoidal. By Corollary~\ref{cor:normalized-classification}, its heart is $\Hh_\chi$ for a unique character $\chi: U_{\D}\to\mathbb Z$. By the definition of the transported $t$-structure, the restriction of $F$ gives an exact tensor equivalence $\C\rightarrow\Hh_\chi$. On the other hand, Theorem~\ref{thm:tensor-regrading} gives an exact tensor equivalence $R_\chi:\D\rightarrow\Hh_\chi$. Composing the first equivalence with a tensor quasi-inverse of $R_\chi$ gives an exact tensor equivalence $\C\to\D$.
\end{proof}

\begin{corollary}\label{cor:t-exact-regrading}
Let $F: D^b(\C)\to D^b(\D)$ be a monoidal triangulated equivalence. There is a unique $\chi_F\in\Hom(U_{\D},\mathbb Z)$ such that $T_{-\chi_F}\circ F$ is $t$-exact for the standard $t$-structures. If $\Hom(U_{\D},\mathbb Z)=0$, then every monoidal triangulated equivalence $D^b(\C)\to D^b(\D)$ is $t$-exact for the standard structures.
\end{corollary}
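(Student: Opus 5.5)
The plan is to transport the standard $t$-structure along $F$, identify the transported structure through Corollary~\ref{cor:normalized-classification}, and read off $\chi_F$ from its shift function. Let $t$ be the standard $t$-structure on $D^b(\C)$ and $F(t)$ its image on $D^b(\D)$, with $D^{\leq0}_{F(t)}=F(D^{\leq0}_{\mathrm{std}})$ and similarly for the coaisle.

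\textbf{Step 1: $F(t)$ is a normalized bounded monoidal $t$-structure.} Boundedness is preserved because $F$ is a triangulated equivalence. The inclusions \eqref{eq:monoidal-t} with $d=0$ hold for the standard structure, and $F$ is strong monoidal, so they transport. By Corollary~\ref{cor:normalized-classification} there is a unique $\chi_F\in\Hom(U_{\D},\mathbb Z)$ whose shift function is $p(Y)=\chi_F(\deg Y)$. In particular $F(\C)=\Hh_{\chi_F}$ as subcategories closed under isomorphism.

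\textbf{Step 2: existence.} By Proposition~\ref{prop:derived-regrading}, $T_\chi$ sends the standard heart onto $\Hh_\chi$, and it sends each simple shift $Y[a]$ with $\chi(\deg Y)=n$ to $Y[a+n]$. So $T_{-\chi_F}$ sends $Y[\chi_F(\deg Y)]$ to $Y[0]$. Hence $T_{-\chi_F}$ maps $\Hh_{\chi_F}$ onto the standard heart, and more generally it carries $D_p^{\leq0}$ and $D_p^{\geq0}$ onto the standard aisle and coaisle. This follows from the explicit description in Proposition~\ref{prop:semisimple-t}, since $a\geq p(Y)$ if and only if $a-\chi_F(\deg Y)\geq0$. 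Consequently $T_{-\chi_F}\circ F$ maps the standard aisle and coaisle of $D^b(\C)$ into those of $D^b(\D)$, so it is $t$-exact.

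\textbf{Step 3: uniqueness.} Suppose $T_{-\psi}\circ F$ is $t$-exact. Then $F$ carries the standard heart of $D^b(\C)$ into $T_\psi(\text{standard heart})=\Hh_\psi$. Because $F$ is an equivalence and hearts of bounded $t$-structures determine them, the transported structure $F(t)$ has heart contained in $\Hh_\psi$. I expect this step to need the most care, and the cleanest route is to argue via aisles rather than hearts. Since $F$ and $T_{-\psi}$ are equivalences and $T_{-\psi}\circ F$ is $t$-exact, the aisle of $F(t)$ equals $T_\psi$ of the standard aisle, which is the aisle $D^{\leq0}_{p_\psi}$. For the reverse inclusion, use that a $t$-exact equivalence has a $t$-exact quasi-inverse, so the aisles match exactly. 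Then Proposition~\ref{prop:semisimple-t} gives equal shift functions, and the uniqueness in Corollary~\ref{cor:normalized-classification} yields $\psi=\chi_F$.

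\textbf{Step 4: the final statement.} If $\Hom(U_{\D},\mathbb Z)=0$, then $\chi_F=0$ and $T_0=\id$, so $F$ itself is $t$-exact.
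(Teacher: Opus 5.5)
Your proposal is correct and follows essentially the same route as the paper. You transport the standard $t$-structure, obtain $\chi_F$ from Corollary~\ref{cor:normalized-classification}, and get existence from the fact that $T_{-\chi_F}$ sends $Y[a]$ to $Y[a-\chi_F(\deg Y)]$. For uniqueness you compare shift functions and conclude via the universal property of $U_{\D}$, which is the injectivity encoded in that corollary. Your aisle argument in Step 3 is a more careful version of the paper's one-line remark that $\chi_F(\deg X)=\eta(\deg X)$ for every simple $X$; the preliminary detour through hearts is unnecessary but harmless.
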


\begin{proof}
Let $\chi_F$ be the character corresponding to the transported standard $t$-structure. Its shift function is $X\mapsto \chi_F(\deg X)$. By Proposition~\ref{prop:derived-regrading}, the autoequivalence $T_{-\chi_F}$ sends this $t$-structure to the standard one. Hence $T_{-\chi_F}\circ F$ is $t$-exact. It remains to prove uniqueness. Suppose that $T_{-\eta}\circ F$ is $t$-exact for another character $\eta$. Then $\chi_F(\deg X)-\eta(\deg X)=0$ for every simple object $X$. Therefore $\chi_F$ and $\eta$ agree on all degrees of simple objects. By the universal property of $U_{\D}$, we have $\eta=\chi_F$.
\end{proof}

\subsection{Module regrading}\label{subsec:module-regrading}

Fix $\chi: U_{\C}\to\mathbb Z$ and a function $p:\Irr(\M)\to\mathbb Z$ satisfying \eqref{eq:compatibility}. Let $\C=\bigoplus_n\C_n$ be the grading induced by $\chi$, and define
\[
\M_r=\add\{M\in\Irr(\M):p(M)=r\}.
\]
Then $\M=\bigoplus_r\M_r$. Since every object has finite length, every object $M\in\M$ has a finite functorial decomposition $M=\bigoplus_r M_r$ with $M_r\in\M_r$. The corresponding projection functors are exact. The compatibility equation \eqref{eq:compatibility} gives 
\begin{equation}\label{eq:graded-action}
\C_n\odot\M_r\subseteq\M_{n+r}.
\end{equation}

\begin{theorem}\label{thm:module-regrading}
The functor
\[
R_p:\M\rightarrow\Hh_p,\quad R_p(M)=\bigoplus_r M_r[r],
\]
is an exact equivalence. It admits a coherent $R_\chi$-module-functor structure and is an $R_\chi$-module equivalence.
\end{theorem}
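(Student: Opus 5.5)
The plan follows the proof of Theorem~\ref{thm:tensor-regrading}, with the module action in place of the tensor product.

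\emph{Step 1: $R_p$ is an exact equivalence.} There are no morphisms between distinct $\M_r$, and $\Hom_{D^b(\M)}(M[r],N[s])=0$ for $r\neq s$ by Lemma~\ref{lem:split-derived}. Hence
\[
\Hom_{\Hh_p}(R_p(M),R_p(N))\cong\bigoplus_r\Hom_{\M}(M_r,N_r)\cong\Hom_{\M}(M,N),
\]
so $R_p$ is fully faithful. By Proposition~\ref{prop:semisimple-t}, every object of $\Hh_p$ is a finite direct sum of objects $S[p(S)]$ with $S\in\Irr(\M)$. Each such object equals $R_p(S)$, so $R_p$ is essentially surjective. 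Exactness is automatic, since both categories are semisimple and $R_p$ is additive.

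\emph{Step 2: the module structure.} By Theorem~\ref{thm:module-compatibility} with $d=0$, we have $\Hh_\chi\odot\Hh_p\subseteq\Hh_p$, so the target action makes sense. Define
\[
\nu_{X,M}\colon R_p(X\odot M)\to R_\chi(X)\odot R_p(M).
\]
On the summand indexed by $(n,r)$, with $X_n\in\C_n$ and $M_r\in\M_r$, we have $(X_n\odot M_r)[n+r]\subseteq (X\odot M)_{n+r}[n+r]$ by \eqref{eq:graded-action}. Take $\nu_{X,M}$ on this summand to be the inverse of $\sigma_{n,r}$ from Lemma~\ref{lem:shift-isomorphism}, which carries the scalar $(-1)^{nr}$ on stalk complexes. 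Then $\nu_{X,M}$ is a direct sum of isomorphisms, because the action distributes over the finite homogeneous decompositions and $(X\odot M)_s=\bigoplus_{n+r=s}X_n\odot M_r$. Naturality follows from the naturality of $\sigma$ and the functoriality of the projections.

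\emph{Step 3: coherence (the main obstacle).} Two diagrams must be checked.

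\begin{itemize}
\item The associativity diagram. It relates $\nu$, the tensorator $\mu$ of $R_\chi$, and the module associators in $\M$ and in $D^b(\M)$. By Lemma~\ref{lem:derived-action}, the latter acts degreewise with no sign. Reduce to homogeneous $X\in\C_n$, $Y\in\C_m$, $M\in\M_r$. The two paths then carry the signs $(-1)^{nm+(n+m)r}$ and $(-1)^{mr+n(m+r)}$. These agree, which is exactly the three-factor compatibility in Lemma~\ref{lem:shift-isomorphism} with $\star=\odot$. After cancelling the common scalar, the diagram reduces to the associativity diagram of $\M$.
\item The unit diagram. Since $\one\in\C_0$ and the unit map of $R_\chi$ is the identity, all signs are $(-1)^{0}=1$, and the diagram reduces to the unit constraint of $\M$.
\end{itemize}

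The main care needed is bookkeeping. The tensorator of $R_\chi$ and the module structure of $R_p$ must use the same $\sigma$ and the same Koszul sign convention, and $\nu$ must be oriented as in the paper's convention $G(X\odot M)\xrightarrow{\sim}F(X)\odot G(M)$. Otherwise the signs in the associativity diagram would not cancel.

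Since $R_p$ is an equivalence carrying a coherent $R_\chi$-module structure, it is an $R_\chi$-module equivalence.
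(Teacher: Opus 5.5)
Your proposal is correct and follows the paper's proof essentially step for step. It uses the same Hom computation and essential-surjectivity argument, and the same constraint built from $\sigma_{n,r}$ and the inclusions into $(X\odot M)_{n+r}[n+r]$; you define it directly as $R_p(X\odot M)\to R_\chi(X)\odot R_p(M)$, while the paper builds $\rho$ in the other direction and takes $\rho^{-1}$. It also uses the same sign check $(-1)^{nm+(n+m)r}=(-1)^{mr+n(m+r)}$.

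One minor correction to your closing remark: the orientation of the constraint does not affect whether the signs cancel, since inverting the scalars $\pm1$ changes nothing. What matters is only that $R_\chi$ and $R_p$ use the same $\sigma$.
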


\begin{proof}
Since the homogeneous projections are functorial, $R_p$ is additive. Semisimplicity gives
\[
\Hom_{\Hh_p}(R_p(M),R_p(N))\cong\bigoplus_r\Hom_{\M}(M_r,N_r)\cong\Hom_{\M}(M,N).
\]
Thus $R_p$ is fully faithful. Every object of $\Hh_p$ is a finite direct sum of simple objects with their prescribed shifts, so $R_p$ is essentially surjective. Hence $R_p$ is an equivalence. It is exact because both categories are semisimple.

We next define the module structure. Let the tensorator 
\[
\rho_{X,M}: R_\chi(X)\odot R_p(M)\rightarrow R_p(X\odot M)
\]
be defined on the summand $X_n[n]\odot M_r[r]$ by composing $\sigma_{n,r}$ from Lemma~\ref{lem:shift-isomorphism} with the inclusion
\[
(X_n\odot M_r)[n+r]\rightarrow(X\odot M)_{n+r}[n+r].
\]
By \eqref{eq:graded-action}, each summand lands in the correct homogeneous component. Since the homogeneous decompositions are finite, these maps together give an isomorphism. Naturality follows from the naturality of $\sigma$ and the functoriality of the projections.

The stalk complex $X_n[n]$ is concentrated in degree $-n$, so the scalar in $\sigma_{n,r}$ is $(-1)^{nr}$. It remains to check coherence. Let $X\in\C_n$, $Y\in\C_s$, and $M\in\M_r$. In the module associativity diagram, the two sides give the signs $(-1)^{ns+(n+s)r}$ and $(-1)^{sr+n(s+r)}$. These signs agree. After cancelling the common scalar, the diagram reduces to the module associativity diagram in $\M$. The total-complex associator contributes no additional sign.

Finally, since $\one\in\C_0$, the sign in the unit diagram is $1$, so this diagram reduces to the unit diagram in $\M$. Thus $\rho$ is coherent. With the convention fixed in this paper, the module-functor constraint of $R_p$ is $\rho_{X,M}^{-1}$.
\end{proof}

\subsection{Reconstruction of module categories}\label{subsec:module-reconstruction}

\begin{proposition}\label{prop:transported-pair}
Let $F: D^b(\C)\to D^b(\D)$ be a monoidal triangulated equivalence, and let $G: D^b(\M)\to D^b(\N)$ be a triangulated equivalence with a coherent $F$-module-functor structure. Transport the standard $t$-structures along $F$ and $G$. The resulting $t$-structure on $D^b(\D)$ is associated with a unique character $\chi: U_{\D}\to\mathbb Z$. Let $p:\Irr(\N)\to\mathbb Z$ be the shift function of the resulting $t$-structure on $D^b(\N)$. Then
\[
p(P)=p(M)+\chi(\deg X)
\]
whenever $X\in\Irr(\D)$, $M,P\in\Irr(\N)$, and $P$ is a direct summand of $X\odot M$.
\end{proposition}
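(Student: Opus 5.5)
The plan is to reduce the statement to Theorem~\ref{thm:module-compatibility} with $d=0$. First, the standard $t$-structure on $D^b(\C)$ is normalized monoidal. Its transport $t_F=(F(D^{\leq0}),F(D^{\geq1}))$ along the monoidal triangulated equivalence $F$ is again a bounded $t$-structure on $D^b(\D)$. It is monoidal with deviation $0$, because the tensorator of $F$ gives $F(A)\otimes F(B)\cong F(A\otimes B)$ and aisles and coaisles are closed under isomorphism. By Corollary~\ref{cor:normalized-classification}, $t_F$ has shift function $X\mapsto\chi(\deg X)$ for a unique $\chi\in\Hom(U_{\D},\mathbb Z)$, with heart $\Hh_\chi=F(\C)$. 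This gives the character in the statement. Similarly, $t_G=(G(D^{\leq0}_{\M}),G(D^{\geq1}_{\M}))$ is a bounded $t$-structure on $D^b(\N)$. By Proposition~\ref{prop:semisimple-t} it equals $t_p$ for a unique $p:\Irr(\N)\to\mathbb Z$, and its heart is $\Hh_p=G(\M)$ (essential image, closed under isomorphism).

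Second, I check compatibility $\Hh_\chi\odot\Hh_p\subseteq\Hh_p$. Take objects $F(X)\in\Hh_\chi$ and $G(M)\in\Hh_p$ with $X\in\C$ and $M\in\M$, both in the standard hearts. The $F$-module structure of $G$ gives an isomorphism
\[
F(X)\odot G(M)\cong G(X\odot M).
\]
The standard heart of $D^b(\M)$ is closed under the action of the standard heart of $D^b(\C)$, since the total complex of two stalk complexes in degree $0$ is a stalk complex in degree $0$. Hence $X\odot M$ lies in the standard heart, and $G(X\odot M)\in\Hh_p$. Every object of $\Hh_\chi$, respectively $\Hh_p$, is isomorphic to some $F(X)$, respectively $G(M)$. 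The action is functorial in both variables, so it respects isomorphisms. This gives $\Hh_\chi\odot\Hh_p\subseteq\Hh_p$.

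Finally, Theorem~\ref{thm:module-compatibility} with $d=0$ states that this inclusion is equivalent to relation \eqref{eq:compatibility}, which is exactly the claimed identity $p(P)=p(M)+\chi(\deg X)$. Alternatively, one can argue directly. Take $X\in\Irr(\D)$, $M,P\in\Irr(\N)$ with $P$ a direct summand of $X\odot M$. Then $X[\chi(\deg X)]\odot M[p(M)]\cong(X\odot M)[\chi(\deg X)+p(M)]$ lies in $\Hh_p$. Since $\Hh_p$ is closed under summands, $P[\chi(\deg X)+p(M)]\in\Hh_p$, and uniqueness of the heart shift in Proposition~\ref{prop:semisimple-t} forces $p(P)=\chi(\deg X)+p(M)$.

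The only step needing care is the second one. The key point is that the module-functor isomorphism of $G$ lets us identify $F(X)\odot G(M)$ with an object of $G(\M)$. Coherence of this structure is not needed here; only the existence of natural isomorphisms is used. So I expect no real obstacle beyond bookkeeping of essential images.
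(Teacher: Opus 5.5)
Your proof is correct and takes essentially the paper's route: transport the standard structures along $F$ and $G$, use the monoidal structure of $F$ and the module structure of $G$ to carry a standard compatibility across, and finish with Theorem~\ref{thm:module-compatibility} at $d=0$. The only difference is that you transport the heart inclusion $\Hh_\chi\odot\Hh_p\subseteq\Hh_p$ and then use the last clause of that theorem (or repeat its summand argument directly), whereas the paper transports the aisle and coaisle inclusions to show that $0$ is a module deviation; in this semisimple setting the two are equivalent.
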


\begin{proof}
The $t$-structure on $D^b(\D)$ is normalized monoidal. Hence Corollary~\ref{cor:normalized-classification} gives a unique character $\chi$. For the standard $t$-structures, the derived action satisfies
\[
D_{\C}^{\leq0}\odot D_{\M}^{\leq0}\subseteq D_{\M}^{\leq0},\quad D_{\C}^{\geq0}\odot D_{\M}^{\geq0}\subseteq D_{\M}^{\geq0}.
\]
Using the monoidal structure of $F$ and the module-functor structure of $G$, these inclusions pass to the transported $t$-structures on $D^b(\D)$ and $D^b(\N)$. Thus $0$ is a module deviation for this pair. The formula now follows from Theorem~\ref{thm:module-compatibility}.
\end{proof}

\begin{theorem}\label{thm:simultaneous-reconstruction}
Let $\C$ and $\D$ be essentially small locally finite semisimple tensor categories, and let $\M$ and $\N$ be nonzero essentially small locally finite semisimple left module categories over $\C$ and $\D$, respectively. Suppose that $F: D^b(\C)\to D^b(\D)$ is a monoidal triangulated equivalence and that $G: D^b(\M)\to D^b(\N)$ is a triangulated equivalence with a coherent $F$-module-functor structure. Then there are an exact $k$-linear tensor equivalence $f:\C\to\D$ and an exact $k$-linear $f$-module equivalence $g:\M\to\N$.
\end{theorem}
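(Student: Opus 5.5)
The plan is to combine Theorem~\ref{thm:tensor-reconstruction} with Proposition~\ref{prop:transported-pair} and Theorem~\ref{thm:module-regrading}. First, transport the standard $t$-structures along $F$ and $G$. By Corollary~\ref{cor:normalized-classification}, the transported structure on $D^b(\D)$ has heart $\Hh_\chi$ for a unique $\chi\in\Hom(U_{\D},\mathbb Z)$. Let $p:\Irr(\N)\to\mathbb Z$ be the shift function of the transported structure on $D^b(\N)$. By Proposition~\ref{prop:transported-pair}, $p$ satisfies \eqref{eq:compatibility}, and Theorem~\ref{thm:module-compatibility} with $d=0$ gives $\Hh_\chi\odot\Hh_p\subseteq\Hh_p$. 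The restriction $F|:\C\to\Hh_\chi$ is an exact tensor equivalence, exactly as in the proof of Theorem~\ref{thm:tensor-reconstruction}. Similarly, $G|:\M\to\Hh_p$ is an exact equivalence, since $G$ sends the standard heart of $D^b(\M)$ onto $\Hh_p$ by construction.

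Next, I equip $G|$ with an $F|$-module structure. The standard heart of $D^b(\M)$ is closed under the action of the standard heart of $D^b(\C)$, and this restricted action is the original action on $\M$, because the total complex of stalk complexes in degree $0$ is the stalk complex of $X\odot M$. The coherent isomorphisms $G(X\odot M)\cong F(X)\odot G(M)$ then restrict to isomorphisms between objects of $\Hh_p$, using that $F(X)\in\Hh_\chi$ and $G(M)\in\Hh_p$. Coherence of these restricted isomorphisms is inherited from the coherence of $G$ as an $F$-module functor, since the hearts are full subcategories and the action on the hearts is the restricted one.

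Finally, set $f=R_\chi^{-1}\circ F|$, with $R_\chi^{-1}$ a monoidal quasi-inverse of the tensor equivalence from Theorem~\ref{thm:tensor-regrading}. Set $g=R_p^{-1}\circ G|$, where $R_p:\N\to\Hh_p$ is the $R_\chi$-module equivalence of Theorem~\ref{thm:module-regrading}, applied to $\D$, $\N$, $\chi$ and $p$. I then need a quasi-inverse $R_p^{-1}$ carrying an $R_\chi^{-1}$-module structure. This is the general fact that a quasi-inverse of a module equivalence over a tensor equivalence is a module functor over the quasi-inverse tensor functor: conjugate the constraint by the unit and counit of the adjoint equivalences and check coherence. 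Composites of module functors over composable monoidal functors are module functors over the composite, so $g$ is an $f$-module equivalence. Both $f$ and $g$ are exact and $k$-linear because all categories involved are semisimple and the functors are additive equivalences.

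The main obstacle is the bookkeeping in the last two steps. One has to verify that the restricted constraints of $G$ really are coherent as an $F|$-module structure, which means checking that the heart action identifications agree with the derived associator of Lemma~\ref{lem:derived-action}. One must also transport module structures along quasi-inverses carefully with respect to the convention $G(X\odot M)\xrightarrow{\sim}F(X)\odot G(M)$. The first of these I would handle by observing that all objects are stalk complexes in the relevant hearts, so no Koszul signs appear. The second I would handle by the standard mate construction.
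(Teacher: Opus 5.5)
Your proposal is correct and is essentially the paper's proof: you restrict $F$ and $G$ to equivalences $\C\to\Hh_\chi$ and $\M\to\Hh_p$ onto the transported hearts, compose with a monoidal quasi-inverse of $R_\chi$ and a module quasi-inverse of $R_p$ obtained by the mate construction (the paper's $S$, $Q$ and $\kappa$), and compose the module constraints. One small correction: $f$ and $g$ are $k$-linear because $F$, $G$, $R_\chi$, $R_p$ are $k$-linear and quasi-inverses of $k$-linear equivalences are $k$-linear, not merely because they are additive equivalences.
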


\begin{proof}
Let $\Hh_\chi$ and $\Hh_p$ be the hearts of the transported $t$-structures from Proposition~\ref{prop:transported-pair}. Restriction gives exact equivalences
\begin{equation*}
\overline F=F|_{\C}:\C\rightarrow\Hh_\chi,\quad \overline G=G|_{\M}:\M\rightarrow\Hh_p.
\end{equation*}
The tensorator of $F$ makes $\overline F$ strong monoidal, and the module constraint of $G$ makes $\overline G$ an $\overline F$-module equivalence. Let
\begin{equation*}
R=R_\chi:\D\rightarrow\Hh_\chi,\quad P=R_p:\N\rightarrow\Hh_p
\end{equation*}
be the equivalences constructed above. Write $\mu_{X,Y}: R(X)\otimes R(Y)\to R(X\otimes Y)$ for the tensorator of $R$, and let $\rho_{X,M}: R(X)\odot P(M)\to P(X\odot M)$ be the isomorphism from Theorem~\ref{thm:module-regrading}.

Choose quasi-inverses $S:\Hh_\chi\to\D$ and $Q:\Hh_p\to\N$, with counits
\begin{equation*}
\varepsilon: RS\xrightarrow{\sim}\id_{\Hh_\chi},\quad
\delta: PQ\xrightarrow{\sim}\id_{\Hh_p}.
\end{equation*}
We first give $S$ a monoidal structure. Define the tensorator $S(A)\otimes S(B)\to S(A\otimes B)$ as the unique map whose image under $R$ satisfies
\begin{equation}\label{eq:S-tensorator}
\varepsilon_{A\otimes B}\circ R\bigl(S(A)\otimes S(B)\to S(A\otimes B)\bigr)\circ\mu_{S(A),S(B)}
=\varepsilon_A\otimes\varepsilon_B.
\end{equation}
Define the unit map in the same way. Since $R$ is fully faithful, these maps are uniquely determined. Naturality follows from that of $\mu$ and $\varepsilon$. The coherence of $\mu$ then gives the associativity and unit conditions. Thus $S$ is strong monoidal and $\varepsilon$ is monoidal.

We next give $Q$ an $S$-module structure. For $A\in\Hh_\chi$ and $B\in\Hh_p$, define
\begin{equation*}
\kappa_{A,B}: S(A)\odot Q(B)\rightarrow Q(A\odot B)
\end{equation*}
as the unique map satisfying
\begin{equation}\label{eq:kappa}
\delta_{A\odot B}\circ P(\kappa_{A,B})\circ\rho_{S(A),Q(B)}
=\varepsilon_A\odot\delta_B.
\end{equation}
Such a map exists and is an isomorphism because $P$ is fully faithful. Its naturality follows from that of $\rho$, $\varepsilon$, and $\delta$. Applying $P$ to the module associativity diagram and using \eqref{eq:kappa} reduces it to the associativity diagram for $\rho$ together with the monoidality of $\varepsilon$. The unit condition follows in the same way. Hence $\kappa^{-1}$ gives a coherent module-functor structure on $Q$.

Finally, invert the restricted module constraint of $\overline G$ and write it as
\begin{equation*}
\gamma_{X,M}:
\overline F(X)\odot\overline G(M)
\xrightarrow{\sim}
\overline G(X\odot M).
\end{equation*}
Set $f=S\overline F$ and $g=Q\overline G$. Then $f$ is a tensor equivalence. Define
\begin{equation*}
\beta_{X,M}
Q(\gamma_{X,M})\circ
\kappa_{\overline F(X),\overline G(M)}
:
f(X)\odot g(M)\rightarrow g(X\odot M).
\end{equation*}
The associativity condition for $\beta$ follows from those for $\kappa$ and $\gamma$, together with the monoidal structure on $f$. The unit condition follows in the same way. Thus $\beta^{-1}$ gives a coherent $f$-module-functor structure on $g$. Therefore $g$ is an $f$-module equivalence. Since $f$ and $g$ are equivalences between semisimple abelian categories, they are exact.
\end{proof}

The argument applies componentwise. Therefore no uniform bound on $p$ is needed across different components, and $p$ may be unbounded on $\Irr(\N)$.

\section{Examples}\label{sec:examples}

\begin{example}\label{ex:graded-vector-spaces}
Let $\C=\Vect_{\mathbb Z}^{\mathrm{fd}}$ be the tensor category of finite-dimensional $\mathbb Z$-graded vector spaces, and let $L_n$ be the one-dimensional object concentrated in degree $n$. Then $L_m\otimes L_n\cong L_{m+n}$ and $U_{\C}\cong\mathbb Z$. Every character $\mathbb Z\to\mathbb Z$ is multiplication by an integer $c$. The corresponding normalized heart is
\[
\Hh_c=\add\{L_n[cn]:n\in\mathbb Z\}.
\]
The tensorator of $R_c$ on $L_m\otimes L_n$ is multiplication by $(-1)^{c^2mn}$. Proposition~\ref{prop:derived-regrading} gives
\[
T_c(L_n[a])=L_n[a+cn].
\]
If $c\ne0$, this autoequivalence is not $t$-exact for the standard $t$-structure. The resulting $t$-structure is not equivalent to the standard one. Its heart is nevertheless tensor equivalent to $\C$.

Now let $\M=\Vect^{\mathrm{fd}}$, with the action induced by the forgetful tensor functor $\Vect_{\mathbb Z}^{\mathrm{fd}}\to\Vect^{\mathrm{fd}}$. Since $L_n\odot k\cong k$ for every $n$, the unique simple object of $\M$ has stabilizer $U_{\C}\cong\mathbb Z$. Hence Theorem~\ref{thm:stabilizer} shows that the character $n\mapsto cn$ extends to a compatible $t$-structure on $D^b(\M)$ only when $c=0$.
\end{example}

\begin{corollary}\label{cor:unique-standard}
The standard normalized monoidal $t$-structure on $D^b(\C)$ is unique if and only if $\Hom(U_{\C},\mathbb Z)=0$.
\end{corollary}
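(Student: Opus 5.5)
By Corollary~\ref{cor:normalized-classification}, the normalized monoidal $t$-structures on $D^b(\C)$ are in bijection with $\Hom(U_{\C},\mathbb Z)$. The plan is to read uniqueness directly off this bijection and check that the standard structure corresponds to the zero character.

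First I identify the standard $t$-structure within the classification. Its heart is $\C$ itself, placed in degree $0$. So its shift function is $p(X)=0$ for every simple $X$, and by Proposition~\ref{prop:semisimple-t} this function is unique. Since $\one\in\C$, the deviation is $p(\one)=0$, so the structure is normalized. In the parametrization of Theorem~\ref{thm:monoidal-classification} it therefore corresponds to the pair $(0,0)$, i.e.\ to the zero character.

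For the direction "$\Hom(U_{\C},\mathbb Z)=0$ implies uniqueness": the set $\MT_0(D^b(\C))$ is in bijection with a one-point set. Hence it consists only of the standard structure.

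For the converse, suppose $\chi\ne0$. Then $\chi$ gives a normalized monoidal $t$-structure with shift function $X\mapsto\chi(\deg X)$. Because the map $\chi\mapsto t_{\chi}$ is injective, this structure differs from the standard one. To make this concrete, I pick $g$ with $\chi(g)\ne0$. The universal grading is faithful, so there is a simple object $X$ of degree $g$ (this is the same fact used in Corollary~\ref{cor:characters-not-comparable}). Then $X[0]$ lies in the standard heart but not in $\Hh_\chi$. By Corollary~\ref{cor:characters-not-comparable}, the two structures are not even equivalent.

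I do not expect a real obstacle. The only point needing care is interpretation: "unique" should mean that the standard structure is the only element of $\MT_0(D^b(\C))$, taken as an equality of $t$-structures rather than up to equivalence. Under the equivalence reading the statement would still hold, by Corollary~\ref{cor:characters-not-comparable}.
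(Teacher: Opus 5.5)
Your proposal is correct and is essentially the paper's argument: the paper's proof just observes that the standard structure corresponds to the zero homomorphism under the bijection of Corollary~\ref{cor:normalized-classification}, and uniqueness is read off from that. Your concrete witness (a simple $X$ of degree $g$ with $\chi(g)\neq 0$, so $X[0]\notin\Hh_\chi$) and the remark about the up-to-equivalence reading are valid but optional; one small fix is that normalization of the standard structure comes from the converse part of Theorem~\ref{thm:monoidal-classification} applied to $(0,0)$, not from $p(\one)=0$ alone, since reading the deviation off $p(\one)$ already presupposes monoidality.
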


\begin{proof}
The standard structure corresponds to the zero homomorphism in Corollary~\ref{cor:normalized-classification}.
\end{proof}

If $\Irr(\C)$ is finite, then $U_{\C}$ is finite and $\Hom(U_{\C},\mathbb Z)=0$. For each $d\in\mathbb Z$, the constant shift function $p(S)=d$ defines a monoidal $t$-structure with deviation $\{d\}$. Thus the uniqueness statement in Corollary~\ref{cor:unique-standard} applies to normalized structures.

\begin{example}\label{ex:nonsemisimple}
Let
\[
A=k(1\rightarrow2\rightarrow3),\quad B=k(1\leftarrow2\rightarrow3)
\]
be path algebras. A Bernstein--Gelfand--Ponomarev reflection gives a $k$-linear triangulated equivalence \cite{Happel}
\[
D^b(A\text{-mod})\simeq D^b(B\text{-mod}).
\]
The abelian categories $A\text{-mod}$ and $B\text{-mod}$ are not equivalent. Indeed, the algebras are basic and their Gabriel quivers are not isomorphic. On the other hand, the reflection equivalence is given by derived tensor product with a bounded complex of bimodules. Hence it is linear over $D^b(\Vect^{\mathrm{fd}})$. Taking $\C=\D=\Vect^{\mathrm{fd}}$ and $F=\id$, we obtain a coherent derived module equivalence. Thus the semisimplicity assumption in Theorem~\ref{thm:simultaneous-reconstruction} cannot be omitted.
\end{example}

Two algebra objects in a tensor category are Morita equivalent when their module categories are equivalent as module categories \cite[Definition~7.8.17]{EGNO}.

\begin{corollary}\label{cor:algebra-objects}
Let $A\in\C$ and $B\in\D$ be separable algebra objects. Suppose that a pair $(F,G)$ as in Theorem~\ref{thm:simultaneous-reconstruction} relates
\[
D^b(\Mod_{\C}(A))\quad\text{and}\quad D^b(\Mod_{\D}(B)).
\]
Then the reconstructed tensor equivalence $f:\C\to\D$ carries $A$ to an algebra object Morita equivalent to $B$.
\end{corollary}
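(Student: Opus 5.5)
Apply Theorem~\ref{thm:simultaneous-reconstruction} to the given pair $(F,G)$ with $\M=\Mod_{\C}(A)$ and $\N=\Mod_{\D}(B)$. This produces an exact tensor equivalence $f:\C\to\D$ and an exact $f$-module equivalence $g:\Mod_{\C}(A)\to\Mod_{\D}(B)$. Before doing so, check the hypotheses. Because $A$ and $B$ are separable, the categories $\Mod_{\C}(A)$ and $\Mod_{\D}(B)$ are semisimple. Every $A$-module is a direct summand of a free module $X\otimes A$: the multiplication map splits $A$-linearly using the separability idempotent. Both categories are nonzero, since they contain $A$ and $B$. They are also locally finite and essentially small, as they inherit this from $\C$ and $\D$. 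The module action is $X\odot M=X\otimes M$, which is exact in each variable.

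Next, transport $A$ along $f$. Since $f$ is strong monoidal, $f(A)$ is an algebra object of $\D$, with multiplication $f(A)\otimes f(A)\cong f(A\otimes A)\to f(A)$. The functor $f$ induces an equivalence $\Mod_{\C}(A)\simeq\Mod_{\D}(f(A))$, sending $M$ to $f(M)$ with action induced through the tensorator. This equivalence is an $f$-module equivalence. Its module constraint is the tensorator of $f$, and its coherence follows from the coherence of that tensorator. Separability of $f(A)$ follows by transporting the splitting, although Morita equivalence does not even require this.

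Now compose the inverse of this equivalence with $g$. The result is an equivalence $\Mod_{\D}(f(A))\to\Mod_{\D}(B)$. It is a $\D$-module functor, with constraint obtained from the $f$-module constraint of $g$ and the module structure above. The key point is that twisting by $f$ cancels: an $f$-module functor composed with the inverse of an $f$-module functor is an $\id_{\D}$-module functor. Verifying this, including coherence of the resulting constraint, is the main bookkeeping step. I would handle it exactly as for $\beta$ in the proof of Theorem~\ref{thm:simultaneous-reconstruction}. That is, I would define the constraint by pasting the two constraints, then check associativity using the monoidal structure on $f$.

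Hence $\Mod_{\D}(f(A))\simeq\Mod_{\D}(B)$ as $\D$-module categories. By \cite[Definition~7.8.17]{EGNO}, this says precisely that $f(A)$ and $B$ are Morita equivalent. The only expected obstacle is the routine verification that the composite constraint is coherent.
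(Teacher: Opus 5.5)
Your proposal is correct and follows the paper's proof. You use separability to get semisimplicity and local finiteness, apply Theorem~\ref{thm:simultaneous-reconstruction}, and then identify $\Mod_{\C}(A)$ with $\Mod_{\D}(f(A))$ through $f$ to obtain a $\D$-module equivalence with $\Mod_{\D}(B)$. The one step you leave implicit is why being a direct summand of a free module forces semisimplicity. The paper supplies it by showing that free modules are projective: $\Hom_A(X\otimes A,-)\cong\Hom_{\C}(X,U(-))$, the forgetful functor $U$ is exact, and every object of $\C$ is projective. Your treatment of the final $\D$-module bookkeeping is, if anything, more explicit than the paper's.
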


\begin{proof}
Let $A$ be a separable algebra object. Separability gives an $A$-linear section of the action map of every right $A$-module. Hence every right $A$-module is a direct summand of a free module.

Since $-\otimes A$ is exact, kernels and cokernels of $A$-module maps are computed in $\C$. Thus the forgetful functor $U\colon\Mod_{\C}(A)\to\C$ is exact. For every $X\in\C$,
\[
\Hom_A(X\otimes A,-)\cong\Hom_{\C}(X,U(-)).
\]
Every object of $\C$ is projective, so every free $A$-module is projective. Since every $A$-module is a direct summand of a free module, every $A$-module is projective. Therefore $\Mod_{\C}(A)$ is semisimple.

Moreover, $\Hom_A(M,N)$ is a subspace of $\Hom_{\C}(M,N)$, and every $A$-module has finite length as an object of $\C$. Hence $\Mod_{\C}(A)$ is locally finite. The same argument applies to $B$.

Theorem~\ref{thm:simultaneous-reconstruction} gives an $f$-module equivalence
\[
\Mod_{\C}(A)\simeq\Mod_{\D}(B).
\]
Applying $f$ to the $A$-module structures identifies $\Mod_{\C}(A)$ with $\Mod_{\D}(f(A))$. Hence $\Mod_{\D}(f(A))$ and $\Mod_{\D}(B)$ are equivalent as $\D$-module categories. Therefore $f(A)$ and $B$ are Morita equivalent.
\end{proof}

Morita equivalent algebra objects need not be isomorphic. For example, the separable algebras $k$ and $M_n(k)$ in $\Vect^{\mathrm{fd}}$ have equivalent module categories but are not isomorphic for $n>1$.

\end{document}